\numberwithin{equation}{section}
\definecolor{darkblue}{rgb}{.1, 0.1,.8}
\definecolor{darkgreen}{rgb}{0,0.8,0.2}
\definecolor{darkred}{rgb}{.8, .1,.1}
\newcommand{\E}{{\mathbb E}}
\newcommand{\bfi}{\begin{fig}}
\newcommand{\efi}{\end{fig}}
\newtheorem{lemma}{Lemma}[section]
\newtheorem{theorem}[lemma]{Theorem}
\newtheorem{proposition}[lemma]{Proposition}
\newtheorem{definition}[lemma]{Definition}
\newtheorem{corollary}[lemma]{Corollary}
\newtheorem{example}[lemma]{Example}
\newtheorem{exercise}[lemma]{Exercise}
\newtheorem{remark}[lemma]{Remark}
\newtheorem{fig}[lemma]{Figure}
\newtheorem{tab}[lemma]{Table}
\newcommand{\bth}{\begin{theorem}}
\newcommand{\ethe}{\end{theorem}}
\newcommand{\bre}{\begin{remark}\em }
\newcommand{\ere}{\end{remark}}
\newcommand{\ble}{\begin{lemma}}
\newcommand{\ele}{\end{lemma}}
\newcommand{\bde}{\begin{definition}}
\newcommand{\ede}{\end{definition}}
\newcommand{\bco}{\begin{corollary}}
\newcommand{\eco}{\end{corollary}}
\newcommand{\bpr}{\begin{proposition}}
\newcommand{\epr}{\end{proposition}}
\newcommand{\bpf}{\begin{proof}}
\newcommand{\epf}{\end{proof}}
\newcommand{\bexer}{\begin{exercise}}
\newcommand{\eexer}{\end{exercise}}
\newcommand{\bexam}{\begin{example}\rm }
\newcommand{\eexam}{\end{example}}
\newcommand{\btab}{\begin{tab}}
\newcommand{\etab}{\end{tab}}
\newcommand{\beao}{\begin{eqnarray*}}
\newcommand{\eeao}{\end{eqnarray*}\noindent}
\newcommand{\beam}{\begin{eqnarray}}
\newcommand{\eeam}{\end{eqnarray}\noindent}
\newcommand{\beqq}{\begin{equation}}
\newcommand{\eeqq}{\end{equation}\noindent}
\newcommand{\beqo}{\begin{equation*}}
\newcommand{\eeqo}{\end{equation*}\noindent}
\newcommand{\bce}{\begin{center}}
\newcommand{\ece}{\end{center}}
\newcommand{\barr}{\begin{eqnarray}}
\newcommand{\earr}{\end{eqnarray}}
\newcommand{\vague}{\stackrel{\lower0.2ex\hbox{$\scriptscriptstyle
                    \it{v} $}}{\rightarrow}}
\newcommand{\weak}{\stackrel{\lower0.2ex\hbox{$\scriptscriptstyle
                    \it{w} $}}{\rightarrow}}
\newcommand{\what}{\stackrel{\lower0.2ex\hbox{$\scriptscriptstyle
                    \it{\hat{w}} $}}{\rightarrow}}
\newcommand{\bdis}{\begin{displaymath}}
\newcommand{\edis}{\end{displaymath}\noindent}
\newcommand{\bealm}{\begin{align}}
\newcommand{\eealm}{\end{align}\noindent}
\newcommand{\tonnote}[1]{\textcolor{black}{#1}}
\newcommand{\zhipengnote}[1]{\textcolor{black}{#1}}
\newcommand{\thomasnote}[1]{\textcolor{black}{#1}}
\newcommand{\ignore}[1]{}
\begin{document}
\def\Send{S_{\text{end}}}

\title[\zhipengnote{On Optimal Exact Simulation of Max-Stable and Related Random Fields on a Compact Set}]{%
\zhipengnote{On Optimal Exact Simulation of Max-Stable and Related Random Fields on a Compact Set}}
\address{Industrial Engineering and Operations Research Department, School
of Engineering and Applied Sciences, Columbia University, 500 West 120th
Street, New York, NY 10027, U.S.A.}
\author[Z. Liu]{Zhipeng Liu}
\email{zl2337@columbia.edu}
\author[J. Blanchet]{Jose Blanchet}
\email{jose.blanchet@columbia.edu}
\author{A.B.~Dieker}
\email{ton.dieker@ieor.columbia.edu}
\author[T. Mikosch]{Thomas Mikosch}
\address{Department of Mathematics, University of Copenhagen,
Universitetsparken~5, DK-2100 Copenhagen, Denmark}
\email{mikosch@math.ku.dk}
\thanks{Support from NSF grant DMS-132055 and NSF grant CMMI-1538217 is gratefully acknowledged by J.
Blanchet. Thomas Mikosch's research is partly supported by the Danish
Research Council Grant DFF-4002-00435 \textquotedblleft Large random
matrices with heavy tails and dependence\textquotedblright . A.B.~Dieker
gratefully acknowledges support from NSF grant CMMI-1252878.}

\begin{abstract}
We consider the random field
\begin{equation*}
M(t)=\sup_{n\geq 1}\big\{-\log A_{n}+X_{n}(t)\big\}\,,\qquad t\in T\,,
\end{equation*}%
for a set $T\subset \mathbb{R}^{m}$, where $(X_{n})$ is an iid
sequence of centered Gaussian random fields on $T$ and $0<A_{1}<A_{2}<\cdots
$ are the arrivals of a general renewal process on $(0,\infty )$,
independent of $(X_{n})$. In particular, a large class of max-stable random
fields with Gumbel marginals have such a representation. Assume that one
needs $c\left( d\right) =c(\{t_{1},\ldots,t_{d}\})$ function evaluations to
sample $X_{n}$ at $d$ locations $t_{1},\ldots ,t_{d}\in T$. We provide an
algorithm which, for any $\epsilon >0$, samples $M(t_{1}),\ldots ,M(t_{d})$
with complexity $o(c(d)\,d^{\epsilon })$ \tonnote{as measured in the $L_p$ norm sense for any $p\ge 1$}. 
Moreover, if $X_{n}$ has an a.s.
converging series representation, then $M$ can be a.s. approximated with
error $\delta $ uniformly over $T$ and with complexity $O(1/(\delta \log
(1/\delta ))^{1/\alpha })$, where $\alpha $ relates to the H\"{o}lder
continuity exponent of the process $X_{n}$ (so, if $X_{n}$ is Brownian
motion, $\alpha =1/2$).
\end{abstract}

\maketitle


\today

\section{Introduction}\label{sec:intro}\setcounter{equation}{0}

Let $X$ be a centered Gaussian random field on a set $T\subseteq \mathbb{R}%
^{m}$, $m\geq 1$ and consider a sequence $\left( X_{n}\right) $ of
independent and identically distributed \ copies of $X$. In addition, let $%
\left( A_{n}\right) $ be a renewal sequence independent of $\left(
X_{n}\right) $. Under mild regularity conditions on the $X$, we will provide
an efficient Monte-Carlo algorithm for sampling the field
\begin{equation}
M(t)=\sup_{n\geq 1}\big\{-\log A_{n}+X_{n}(t)+\mu (t)\big\},\qquad t\in T\,,
\label{eq_defM}
\end{equation}%
where $\mu :T\longrightarrow \mathbb{R}$ is a bounded function.

We will design and analyze an algorithm for the exact simulation of
\begin{equation*}
M(t_{1}),\ldots ,M(t_{d})\quad
\mbox{for any choice of distinct locations
$t_{1},...,t_{d}\in T$,}
\end{equation*}%
and we will show that, in some sense, this algorithm is asymptotically
optimal as $d\rightarrow \infty $.

The algorithm proposed here shaves off a factor of order (nearly) $d$ from
the running time of any of the existing exact sampling procedures. In
particular, we will show \tonnote{that, under mild boundedness assumptions on $X$, it}
is as hard to sample $(M(t_{i}))_{i=1,\ldots ,d}$ as it is to sample $(X(t_{i}))_{i=1,\ldots ,d}$. 
Therefore, at least from a simulation point of
view, it is not more difficult to work with $M$ than with $X$. More
precisely, if it takes $O(c(d))$ units of computing time to sample $X$ at $d$
distinct locations $t_{1},\ldots ,t_{d}\in T$, then, for any given $\epsilon
>0$, it takes $o(c(d)\,d^{\epsilon })$ units to sample $M$ at the same
locations; see Theorem \ref{Thm_Main} for a precise formulation.


We illustrate this result by considering fractional Brownian motion\ $X$ on $%
T=[0,1]$. Using the circulant-embedding method (see \cite{AGbook}, Section
XI.3), we have $c(d)=O(d\log d)$ provided we sample at the dyadic points $%
t_{i}=i/2^{-m}$ for $i=1, \ldots ,2^{m}=d$ (which we call dyadic points at
level $d$). In the case of Brownian motion, one even has $c(d)=O(d)$,
corresponding to the simulation of $d$ independent Gaussian random
variables. Thus, in the case of fractional Brownian motion\ on $[0,1]$ we
provide an algorithm for sampling $M$ at the dyadic points at level $d$ in
[0,1] with complexity $o(d^{1+\epsilon })$ for any $\epsilon >0$; see \cite[%
Sec.~XI.6]{AGbook}.

Moreover, if $X$ has a series representation a.s.~converging uniformly on $T$
(such as the L\'{e}vy-Ciesielski representation for Brownian motion, see
\cite[Sec.~3.1]{partzsch:schilling:2012}\thomasnote{)}, we also propose an approximate
simulation procedure for $M$ with a user-defined (deterministic) bound on
the error which holds with probability one uniformly throughout $T$. More
precisely, for any $\delta >0$, the procedure that we present outputs an
approximation $M_{\delta }$ to $M$ such that
\begin{equation}
\sup_{t\in T}|M(t)-M_{\delta }(t)|\leq \delta \qquad \mathrm{a.s.}
\label{eq_BND}
\end{equation}%
The results concerning (\ref{eq_BND}) are reported in Theorem \ref{Thm_TES}.
The method of designing a family $(M_{\delta })_{\delta >0}$ such that (\ref%
{eq_BND}) holds is known as \emph{Tolerance Enforced Simulation} (TES) or
\emph{$\delta $-strong simulation}; see \cite{BCD} and \cite{PJR} for
details. Note that a TES algorithm enforces a strong (almost sure) guarantee
without knowledge of any specific set of sampling locations. This is a
feature which distinguishes TES from more traditional algorithms in the
broad literature on simulation of random fields and processes.

As will be explained later, the evaluation of $M_{\delta }(t)$ for fixed $t$
takes $O(1)$ units of computing time while the construction of the process $%
M_{\delta }$ will often take $O(1/(\delta \log (1/\delta ))^{2})$ units. The
latter result holds under assumptions on the convergence\ of the series
representation of $X$ which, in particular, are satisfied for Brownian
motion $X$. In the latter case, the proposed procedure achieves a complexity
of order $O(d)$ for the exact sampling of $M$ on the dyadic points at level $%
d$ (because the series truncated at level $d$ is exact on the dyadic points
at level $d$). Therefore, the exact sampling procedure based on Theorem \ref%
{Thm_TES} applied to the dyadic points at level $d$ is optimal because it
takes $O(d)$ computational cost to sample $X$ at $d$ dyadic points.
Moreover, the convergence rate \ignore{in (\ref{eq_BND})}\zhipengnote{of the TES algorithm} is also optimal in the
Brownian case. In order to obtain a uniform error of order $O(\delta )$, one
requires to discretize Brownian motion using a grid of size $O(1/(\delta
\log (1/\delta ))^{2})$; see \cite{AGP}.

Our results are mainly motivated by application to the simulation of
max-stable random fields. Indeed, if $(A_{i})$ is the arrival sequence of a
unit rate Poisson process on $(0,\infty )$, $M$ is a \emph{max-stable process%
} in the sense of de Haan \cite{Haan}. This means, in particular, that the
distribution of $M(t)$ for any fixed $t\in T$ has a Gumbel distribution
which is one of the max-stable distributions. The latter class of
distributions consists of the non-degenerate limit distributions for the
suitably centered and scaled partial maxima of an iid sequence; see for
example \cite{EKMBK}. The non-Gumbel max-stable processes with Fr\'{e}chet
or Weibull marginals are obtained from the representation (\ref{eq_defM}) by
suitable monotone transformations. We also mention that de Haan \cite{Haan}
already proved that max-stable processes with Gumbel marginals have
representation (\ref{eq_defM}), where $X$ may have a rather general
dependence structure not restricted to Gaussian $X$. However, the case of
Gaussian $X$ has attracted major attention. The case of Brownian $X$ was
treated in \cite{BR}\thomasnote{; it} is known as the 
\emph{Brown-Resnick process}. 
\thomasnote{In the paper \cite{KZH}, the case of a general Gaussian process $X$ with stationary increments was treated, including}
\ignore{Other
examples includes }the case of a Gaussian process $X$ defined on a
multidimensional set $T$ often referred to as \emph{Smith model}.
\thomasnote{It is}
used in environmental applications for modeling storm profiles; \thomasnote{see for example} \cite{Smith}. General characterizations, including spectral
representations and further properties, have been obtained as well; for
example, see \cite{Sch} and \cite{KZH}. However, the explicit joint distribution
of the max-stable process is in general not tractable. Because max-stable
processes are generated as weak limits of maxima of iid random fields,
max-stable models are particularly suited for modeling extremal events in
\thomasnote{spatio-temporal} contexts. These include a wide range of applications of
environmental type, for example, extreme rainfall \cite{HZ} and extreme
\thomasnote{temperature~\cite{Tetal}.}

Recently, several exact sampling procedures for $M$ have been proposed and
studied in the literature. In \cite{DM}, an elegant and easy-to-implement
procedure was proposed for the case in which $X$ has stationary increments.
Such a procedure has a computational complexity at least of order $O(c(d)\,d)$; see 
Proposition~4 in \cite{DEO}. So, for example, if $X$ is fractional
Brownian motion, the procedure takes at least $O(d^{2}\log d)$ units of
computing time to produce $d$ dyadic points of $M$ in $[0,1]$.

Another exact simulation method for $M$ was recently proposed in \cite{DEO}.
It also has complexity $O(c(d)\,d)$ (see Proposition 4 in \cite{DEO}), thus
the procedure in \cite{DEO} takes $O(d^{2}$) for fractional Brownian motion
(neglecting the contribution of logarithmic factors). This method is based
on the idea of simulating the extremal functions. It is completely different
from the approach taken here. Additional work concentrates on max-stable
processes which satisfy special characteristics. \zhipengnote{For example, \cite{Sch}
proposed an exact simulation algorithm for the moving maxima model under
suitable uniformity conditions. }

\tonnote{Another recent development is \cite{OSZ}, where the authors discuss an
exact sampling algorithm for max-stable fields using the so-called normalized spectral representation. 
If the normalized spectral functions can be sampled with cost $c_{\text{\tiny{NS}}}(d)$, 
then the algorithm in \cite{OSZ} samples the max-stable field exactly with complexity $O(c_{\text{\tiny{NS}}}(d))$. 
However, for Gaussian-based max-stable fields, it is an open problem to devise exact sampling algorithms for the 
normalized spectral function, and it is unclear how $c_{\text{\tiny{NS}}}(d)$ compares with the complexity $c(d)$ of sampling $X$.}

An important difference between our method and those in \cite{DM} and \cite%
{DEO} is the following: Both \cite{DM} and \cite{DEO} take advantage of
representations or structures which allow to truncate the infinite
max-convolution in (\ref{eq_defM}) while preserving the simple Gaussian
structure of the number of terms in the truncation. Because the simple
structure of these terms is preserved, the number of terms in the truncation
increases at least linearly in $d$. In contrast, we are able to truncate the
number of terms in the infinite max-convolution uniformly in $d$. While the
terms in the truncation have a slightly more complex structure (they are no
longer iid Gaussian), they are still quite tractable from a simulation
standpoint.

This paper is organized as follows: In Section~\ref{SEC_MAIN_THM} we present
our main result and in Section \ref{sec:strategy} we\ discuss our general
strategy, based on \emph{milestone events} or \emph{record-breakers.} The
record-breaking strategy is illustrated in Section~\ref%
{Section_Milestone_RW_new} in the setting of random walks, which is needed in
our context due to the presence of $(A_{n})$ in $M$. Then we apply the
record-breaking strategy to the setting of maxima of Gaussian random vectors
with focus on Section \ref{SECTION_MILESTONE}: This section describes the
main algorithmic developments of the paper. A complexity analysis is
performed in Section~\ref{sec:complexity}. We introduce and analyze a TES
algorithm in Section~\ref{SECT_TES}. Finally, in Section~\ref{SEC_NUMERICS},
we conclude our paper with a series of empirical comparison results.

\section{Main Result\label{SEC_MAIN_THM}}\setcounter{equation}{0}



This section provides a formal statement of the main result and its
underlying assumptions. We assume that $(A_{n})_{n\geq 0}$ is a renewal
sequence, as mentioned in the Introduction. In particular, $A_{0}=0$, and $%
A_{n}=\tau _{1}+\cdots +\tau _{n}$, $n\geq 1$, where $\left( \tau
_{i}\right) $ is an iid sequence of positive random variables, independent
of $(X_{n})$.

\bigskip

We introduce the following technical assumptions applicable to $\left(
A_{n}\right) $:

\begin{enumerate}
\item[A1)] For any $\gamma <\mathbb{E}\tau _{1}$, there exists some $\theta
_{\gamma }>0$ such that $\mathbb{E}[\exp (\theta _{\gamma }(\gamma -\tau
_{1}))]=1$.

\item[A2)] It is possible to sample step sizes under the nominal probability
measure as well as under the exponentially tilted distribution
\begin{equation*}
\mathbb{E}\big[\exp (\theta _{\gamma }\,(\gamma -\tau _{1}))\mathbf{1}(\tau
_{1}\in dt)\big].
\end{equation*}
\end{enumerate}

\bigskip

We also introduce the following assumptions \ignore{applicable to}\thomasnote{on} the Gaussian field
$\left( X( t) \right) _{t\in T}$. \ignore{and the function $\mu $.}

\begin{enumerate}
\item[B1)] $\mathbb{E}[X(t)]=0$.

\item[B2)] $\mathbb{E}[\exp \left( p\sup_{t\in T}X(t) \right)] <\infty $ for any $p \ge 1$.
\end{enumerate}

\bigskip
\begin{remark}\rm
By Borell's inequality \cite[Thm.~2.1.1]{AT}, \zhipengnote{if $T$ is bounded}, a sufficient condition for B2)
is
\begin{equation*}
\mathrm{Var}(X(s)-X(t))\le c |s-t|^\beta
\end{equation*}
for any $s,t \in T$ and some $c>0$, $\beta >0$. \thomasnote{Define $\sigma ^{2}(t)=%
\mathrm{Var}\left( X( t) \right)$. Then, under B1) and
B2),}
\begin{equation*}
\sup_{t\in T}\sigma ^{2}(t)
=\sup_{t\in T}\mathbb{E}[X( t)^{2}]\leq \mathbb{E}\left[\sup_{t\in
T}X( t) ^{2}\right]<\infty .
\end{equation*}
\end{remark}
We also assume that sampling $(X(t_{i}))_{i=1,\ldots ,d}$ costs $%
c(\{t_{1},\ldots ,t_{d}\})\geq d$ units of operations. In this paper, a
single operation can be any single arithmetic operation, generating a
uniform random variable, calculating a Gaussian cumulative probability
function, comparing any two numbers, or retrieving a Gaussian quantile
value. For simplicity in the notation, we shall simply write $c\left(
d\right) =c(\{t_{1},\ldots ,t_{d}\})$. The locations $t_{1},...,t_{d}$ will
be assumed given throughout our development.

\bigskip

\noindent The following is our performance guarantee for our final
algorithm, \textbf{Algorithm M}, presented in Section~\ref{sec:complexity}.
A crucial part of the theorem is that the points $t_1,\ldots,t_d$ for any $%
d\ge 1$ lie in a fixed \thomasnote{set~$T$.}

\begin{theorem}
\label{Thm_Main} \thomasnote{Assume the conditions {\rm A1), A2),} and 
{\rm B1), B2).} Then \textbf{Algorithm M} 
outputs $M(t_{1}),\ldots ,M(t_{d})$ without any bias, and 
the total number $R$} of operations in the execution of \thomasnote{this algorithm}
\ignore{\textbf{Algorithm M%
}}satisfies 
$\mathbb{E}[R^{p}]=o(d^{\epsilon }c(d)^{p})$ for any $p\geq 1$
and $\epsilon >0$.


\end{theorem}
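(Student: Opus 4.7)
The plan is to decompose the running time as $R = R_{\mathrm{main}} + R_{\mathrm{oh}}$, where $R_{\mathrm{main}} \le N\,c(d)$ is the cost of sampling the copies $X_{1},\ldots,X_{N}$ of the Gaussian field at the $d$ design locations and $R_{\mathrm{oh}}$ collects the overhead of generating the renewal sequence $(A_{n})$ nominally and under the exponential tilt (using A2)), maintaining the record-breakers, and updating the running per-coordinate maxima. Each part of $R_{\mathrm{oh}}$ is bounded by a low-degree polynomial in $N$ times a polylogarithmic factor in $d$. Minkowski's inequality then gives $\|R\|_{p} \le \|N\|_{p}\,c(d) + C(\log d)^{q_{p}}\|N\|_{p'}$ for some $p'\ge p$ and constants $C, q_{p}$, so the theorem reduces to showing $\mathbb{E}[N^{p}] = (\log d)^{O(1)}$ uniformly in $d$, which is $o(d^{\epsilon})$ for every $\epsilon>0$.

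Unbiasedness is a structural property of Algorithm M: it maintains the partial maxima $M_{n}(t_{i}) = \max_{k\le n}\{-\log A_{k} + X_{k}(t_{i}) + \mu(t_{i})\}$ and terminates at the first random time $N$ at which a milestone event certifies, almost surely, that every future summand $-\log A_{k} + X_{k}(t_{i}) + \mu(t_{i})$ with $k>N$ falls below $M_{N}(t_{i})$ for each $i=1,\ldots,d$. Because this certificate is exact rather than probabilistic, $M_{N}(t_{i}) = M(t_{i})$ and the output has the correct joint law of $M(t_{1}),\ldots,M(t_{d})$.

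The main technical effort is the right-tail bound on $N$. The milestone requires $-\log A_{n}$ plus a coupled envelope dominating $\sup_{k\ge n}\sup_{t\in T} X_{k}(t)$ to fall below the running per-coordinate minimum $\min_{i} M_{n}(t_{i})$. Under B1)--B2), Borell's inequality applied to both $X$ and $-X$ shows that $\sup_{t\in T} X(t)$ and $-\inf_{t\in T} X(t)$ have exponential moments, so the threshold that the algorithm must beat is bounded from below independently of $d$ up to at most a polylogarithmic correction. Using the Cram\'er root $\theta_{\gamma}>0$ from A1) to exponentially tilt $(\tau_{i})$ so that $-\log A_{n}$ acquires negative drift at rate $\gamma$, a standard change-of-measure argument combined with the exponential-moment bound from B2) produces a tail of the form $\mathbb{P}(N>n)\le C(\log d)^{\kappa} q_{0}^{n}$ for some $q_{0}\in(0,1)$ and $C,\kappa>0$ independent of $d$. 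This gives $\mathbb{E}[N^{p}] = (\log d)^{O(1)}$ and completes the proof via the Minkowski decomposition above.

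The main obstacle is the design of the milestones so that the certificate is simultaneously exact (for unbiasedness) and tight enough that the tail of $N$ has only polylogarithmic dependence on $d$. This requires a careful coupling between the exponentially tilted renewal process (A1)--A2)) and an envelope dominating the iid Gaussian suprema $\sup_{t\in T} X_{k}(t)$, with B2) supplying the exponential-moment control needed to verify the milestone in $O(c(d))$ time per iteration. Without such a coupling, as in the prior algorithms of \cite{DM} and \cite{DEO}, the number of retained terms in the max-convolution \eqref{eq_defM} grows linearly in $d$; our truncation is uniform in $d$ at the price of slightly more complex per-term distributions.
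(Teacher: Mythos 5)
Your overall architecture matches the paper's: decompose $R$ into $N\,c(d)$ plus overhead, reduce unbiasedness to the pathwise truncation identity \eqref{eq:truncationsup} certified by the random times $N_X,N_A,N_a$, and reduce the complexity claim to a moment bound on $N$ that is $o(d^{\epsilon})$. The unbiasedness paragraph is a fair high-level summary of what the paper does (though it takes for granted the content of Propositions \ref{Prop_BL_SI_1}, \ref{prop:distributionSingleRecord} and \ref{prop:choiceL}, namely that the acceptance/rejection constructions really produce the joint law of the record times together with the sequences $(A_n)$ and $(X_n)$).

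There is, however, a genuine gap in the complexity part: the claimed tail bound $\mathbb{P}(N>n)\le C(\log d)^{\kappa}q_{0}^{n}$ with $q_{0}\in(0,1)$ independent of $d$, and the resulting claim $\mathbb{E}[N^{p}]=(\log d)^{O(1)}$, are false for this construction. The truncation level must satisfy $-\log(\gamma n)+\overline{X}_{n}\le -\log A_{1}+\underline{X}_{1}$ for all $n>N$, where $\overline{X}_{n}=\max_{i\le d}X_{n}(t_{i})$ is typically of order $\overline{\sigma}\sqrt{2\log d}$ (e.g.\ for weakly correlated locations), while the right-hand side is bounded below by a $d$-independent random variable. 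Hence $\log(\gamma N)\gtrsim \overline{\sigma}\sqrt{2\log d}$, i.e.\ $N\ge \exp(c\sqrt{\log d})$ with high probability, which exceeds every power of $\log d$. This is exactly why the paper's Algorithm X starts from a burn-in level $n_{0}(d)$ with $\log n_{0}(d)=O(\sqrt{\log d})$ (chosen so that $d\,r(n_{0})\le\delta$ in Proposition \ref{prop:choiceL}), and why the paper proves the weaker but correct estimate $\log\mathbb{E}[N_{X}^{p}]=O(\sqrt{\log d})$ via the decomposition $N_{X}=n_{0}+\sum_{i=1}^{G}K_{i}$, the moment bound $\log\mathbb{E}[K^{p}]=O(\log n_{0})$ for the proposal distribution $g_{n_{0}}$, and the geometric bound on the number of records $G$. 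That estimate still yields $\mathbb{E}[N^{p}]=o(d^{\epsilon})$, so your final conclusion survives, but your ``standard change-of-measure argument'' never engages with where $d$ actually enters (through the envelope $a\log n+C$ and the union bound over $d$ locations), and as stated the key intermediate bound is wrong. You also do not address the finiteness of $\mathbb{E}[N_{A}^{p}]$ and $\mathbb{E}[N_{a}^{p}]$, which the paper handles separately via moments of stopped random walks and Assumption B2).
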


\section{Building Blocks For Our Algorithm}\setcounter{equation}{0}

\label{sec:strategy} This section serves as a roadmap for the algorithmic
elements behind our approach. We start with a few definitions:

\begin{equation*}
\overline{X}_{n}=\max_{i=1,\ldots ,d}X_{n}(t_{i}),\qquad \underline{X}%
_{n}=\min_{i=1,\ldots ,d}X_{n}(t_{i}).
\end{equation*}%
We shall use $\overline{X}$ and $\underline{X}$ to denote generic copies of $%
\overline{X}_{n}$ and $\underline{X}_{n}$, respectively. We also set $%
\overline{\mu }=\max_{i=1,\ldots ,n}\mu (t_{i})$ and $\underline{\mu }%
=\min_{i=1,\ldots ,n}\mu (t_{i})$.%

Our algorithm relies on three random times \thomasnote{which are finite a.s. 
They depend on parameters $a\in (0,1]$, $C\in\mathbb R$, $0<\gamma<\E[A_1]$ to be chosen later.}
\begin{enumerate}
\item
\thomasnote{$N_{X}=N_{X}(a,C)$:}
for all $n>N_{X}$,
\begin{equation*}
\overline{X}_{n}\leq a\log n+C.
\end{equation*}
\thomasnote{A straightforward Borel-Cantelli argument shows that $N_X$ is
finite.}
\item
\thomasnote{$N_{A}=N_{A}(\gamma )$: }
for all $n>N_{A}$,
\begin{equation}  \label{eq:boundAgamma}
A_{n}\geq \gamma n.
\end{equation}
\item
\thomasnote{$N_{a }=N_{a}(\gamma ,a,C)$: }
for all $n>N_a$,
\begin{equation}  \label{eq:defNmu}
n\gamma \geq A_{1}\,n^{a}\exp (C-\underline{X}_{1}).
\end{equation}
\end{enumerate}
\thomasnote{Applying the defining properties of these random times,} 
we find that for $%
n>N:=\max (N_{A},N_{X},N_a)$ and any \thomasnote{$t\in\{t_{1},\ldots ,t_{d}\}$},
\begin{eqnarray*}
-\log A_{n}+X_{n}(t) &\leq &-\log A_{n}+\overline{X}_{n} \\
&\leq &-\log A_{n}+a\log n+C \\
&\leq &-\log (n\gamma )+a\log n+C \\
&\leq &-\log A_{1}+\underline{X}_{1} \\
&\leq &-\log A_{1}+X_{1}(t).
\end{eqnarray*}

We conclude that, for $t\in \{t_1,\ldots,t_d\}$,
\begin{eqnarray}  \label{eq:truncationsup}
\sup_{n\ge 1} \left\{-\log A_n + X_n(t) + \mu(t) \right\} = \max_{1\le n\le
N} \left\{-\log A_n + X_n(t) + \mu(t) \right\},
\end{eqnarray}
and thus we can sample $M(t_1),\ldots,M(t_d)$ with computational complexity 
\thomasnote{$N c(d)$} plus the overhead to identify $N_A$ and $N_X$.

From an algorithmic point of view, the key is the simulation of the random
variables $N_X$, $N_A$, and $N_a$. \thomasnote{If we know how to
simulate these quantities, relation \eqref{eq:truncationsup} indicates
that we must be able to simulate the sequences $(A_n)$ and $(X_n)$
up to and jointly with $N$ which heavily depends on both sequences.}


\begin{remark}\label{rem:remove_assump}\rm
\thomasnote{Assumptions A1) and A2) can be removed without loss of generality. To see this, we first observe that for any $r>0$,}
\ignore{First, observe that we have for any $r>0$ we have that }
$\tau_{i}(r) =\min \left(
\tau _{i},r\right) \leq \tau _{i}$ and, therefore,
\begin{equation*}
A_{n}( r ) =\tau _{1}( r) +\cdots+\tau _{n}( r ) \leq A_{n}.
\end{equation*}%
Moreover, we can select $r>0$ so that $\gamma <\mathbb{E}\left[ \tau _{i}( r
) \right] <\mathbb{E}[ \tau _{i}] $. \thomasnote{Hence} \ignore{which means that}we can use $\left(
A_{n}( r) \right) _{n\geq 1}$ \ignore{in order}to find $N_{A}$ satisfying
\begin{equation*}
A_{n}>A_{n}( r) >\gamma\,n .
\end{equation*}%
\ignore{which implies (\ref{eq:boundAgamma}).} Because $0\leq \tau _{n}( r) \leq r$,
the moment generating function of $\tau _{n}( r) $ exists on the whole real
line. By convexity, one can always choose $\theta _{\gamma }$ which
satisfies $\mathbb{E}[\exp (\theta _{\gamma } (\gamma-\tau_1(r)))]=1$, as
long as $\mathrm{Var}\left( \tau _{i}( r) \right) >0$ (i.e. if $\tau _{i}>0$
is non-deterministic, by choosing $r>0$ large enough). If $\tau _{i}$ is
deterministic, the strategy can be implemented directly, that is, we can
simply select $N_{A}$ deterministic.
\zhipengnote{Once we find $N_A$, we can recover $(A_n)_{n \le N_A}$ from $(A_n(r))_{n \le N_A}$ by replacing $\tau_n(r)$ with an independent sample of $\tau_n$ given $\tau_n\ge r$, for any $n \le N_A$ such that $\tau_n(r)=r$, and keeping $\tau_n(r)$ if it is less than $r$.}

Given our previous discussion, we might concentrate on how to sample from an
exponentially tilted distribution of a random variable with compact support,
which may require evaluating the moment generating function in closed form.
Sampling from an exponentially tilted distribution is straightforward for
random variables with finite support. So, the strategy can be implemented
for $\left\lfloor \tau _{i}\left( r\right) \Delta \right\rfloor /\Delta <\tau _{i}\left(
r\right) $, \zhipengnote{where $\left\lfloor \cdot \right\rfloor$ is 
the round-down operator}, picking $\Delta >0$ sufficiently small so that $\mathbb{E}\left[ %
\left\lfloor \tau _{i}\left( r\right) \Delta \right\rfloor/\Delta \right] >\gamma $.
Once $\left\lfloor \tau _{i}\left( r\right) \Delta \right\rfloor$ is sampled we can
easily simulate $\tau _{i}\left( r\right) $ using acceptance/rejection. 
The details of this idea are explained in \cite{BW}.
\end{remark}

\section{Sampling a Random Walk up to a Last Passage Time\setcounter{equation}{0}}\setcounter{equation}{0}

\label{Section_Milestone_RW_new}

\zhipengnote{In this section, we discuss the simulation of the random time $N_A$ 
jointly with the sequence $(A_n)_{n\ge 0}$.}\ignore{Not true: Since $A_n$ is a random walk with exponentially distributed increments,} 
\thomasnote{We lead this discussion in the context of a general 
random walk} $(S_{n})_{n\geq 0}$ starting from the origin with
negative drift. \thomasnote{It is} eventually negative almost surely. 
\thomasnote{We review} an algorithm from \cite{BS} for finding
a random time $N_{S}$ such that $S_{n}<0$ for all $n>N_{S}$.  Our aim is to
develop a sampling algorithm for $(S_{1},\ldots ,S_{N_{S}+\ell })$ for any
fixed $\ell \geq 0$. Our discussion here provides a simpler version of the
algorithm in \cite{BS} and allows us to provide a self-contained development
of the whole procedure \thomasnote{for sampling} $M\left( t_{1}\right) ,\ldots,M\left(
t_{d}\right) $.

The algorithm is based on alternately sampling upcrossings and downcrossings
of the level $0$. We write $\xi _{0}^{+}=0$ and, for $i\geq 1$, we
recursively define
\begin{equation*}
\xi _{i}^{-}=%
\begin{cases}
\inf \{n\geq \xi _{i-1}^{+}:S_{n}<0\} & \text{if }\xi _{i-1}^{+}<\infty \\
\infty & \text{otherwise}%
\end{cases}%
\end{equation*}%
together with
\begin{equation*}
\xi _{i}^{+}=%
\begin{cases}
\inf \{n\geq \xi _{i}^{-}:S_{n}\geq 0\} & \text{if }\xi _{i}^{-}<\infty \\
\infty & \text{otherwise}.%
\end{cases}%
\end{equation*}%
As usual, in these definitions the infimum of an empty set should be
interpreted as $\infty $. Writing
\begin{equation*}
N_{S}=\sup \{\xi _{n}^{-}:\xi _{n}^{-}<\infty \},
\end{equation*}%
\thomasnote{and keeping in mind that $(S_n)$ starts as zero and 
has negative drift,
we have by construction  $0\leq N_{S}<\infty $ almost surely, and for  $n>N_{S}$,
$S_{n}\leq 0$.} \ignore{\thomasnote{delete}\tonnote{in view of the negative drift} \thomasnote{delete: this sentence is superfluous}}
\ignore{(it even holds for $%
n=N_{S}$ but we do not include this for expository reasons).}The random
variable $N_{S}-1$ is an upward last passage time:
\begin{equation*}
N_{S}-1=\sup \{n\geq 0:S_{n}\geq 0\}.
\end{equation*}%

We write $\mathbb{P}_{x}$ for the distribution of the random walk starting from $x\in
\mathbb{R}$, so that $\mathbb{P}=\mathbb{P}_{0}$. We assume the existence of
\emph{Cram\'{e}r's root}, $\theta >0$, satisfying \thomasnote{$\mathbb{E}[\exp (\theta
S_{1})]=1$.} Also assume that we can sample a random walk starting from $x$
under $\mathbb{P}_{x}^{\theta }$, which is defined with respect to $\mathbb{P%
}_{x}$ through an exponential change of measure: \thomasnote{on} the $\sigma $-field
generated by $S_{1},\ldots ,S_{n}$ we have
\begin{equation*}
\frac{d\mathbb{P}_{x}}{d\mathbb{P}_{x}^{\theta }}=\exp (-\theta (S_{n}-x)).
\end{equation*}%
Under $\mathbb{P}_{x}^{\theta }$, the random walk $(S_{n})$ has positive
drift.

The rest of this section is organized as follows: \thomasnote{
\begin{itemize}
\item
In Section~\ref{sec:sampledownupcrossing} we discuss 
sampling of downcrossing and upcrossing segments of the random walk. 
\item
In Section~\ref{sec:beyondNS} we explain
how to sample beyond $N_S$. 
\item
In Section~\ref{sec:fullalgorithmA} we presents our
full algorithm for sampling $(S_1,\ldots, S_{N_S+\ell})$.
\end{itemize}}

\subsection{Downcrossings and upcrossings}

\label{sec:sampledownupcrossing} To introduce the algorithm, we first need
the following definitions:
\begin{equation*}
\tau ^{-}=\inf \{n\geq 0:S_{n}<0\},\quad \quad \tau ^{+}=\inf \{n\geq
0:S_{n}\geq 0\}.
\end{equation*}

For $x\ge 0$, it is immediate that we can sample a downcrossing segment $%
S_1,\ldots,S_{\tau^-}$ under $\mathbb{P}_x$ due to the negative drift, and
we record this for later use in a pseudocode function. \emph{Throughout this
paper, `sample' in pseudocode stands for `sample independently of anything
that has been sampled already.'}

\textbf{Function }\textsc{SampleDowncrossing}\textbf{(}$x$\textbf{): Samples
}$(S_{1},\ldots ,S_{\tau ^{-}})$\textbf{\ under }$\mathbb{P}_{x}$\textbf{\
for }$x\geq 0$

Step 1: Return sample $(S_{1},\ldots ,S_{\tau ^{-}})$ under $\mathbb{P}_{x}$.

Step 2: EndFunction

\bigskip

Sampling an upcrossing segment is much more challenging because it is
possible that $\tau ^{+}=\infty $, so an algorithm needs to be able to
detect this event within a finite amount of computing resources. For this
reason, we understand sampling an upcrossing segment under $\mathbb{P}_{x}$
for $x<0$ to mean that an algorithm outputs $(S_{1},\ldots ,S_{\tau ^{+}})$
if $\tau ^{+}<\infty $, and otherwise it outputs `degenerate.'

Our algorithm is based on importance sampling and exponential tilting,
techniques that are widely used for rare event simulation \cite[p.~164]%
{AGbook}. Under Assumption A1), it is well-known that $\mathbb{E}%
_{x}^{\theta }[\tau ^{+}]<\infty $; for instance, see \cite[p.~231, Cor.~4.4]{Abook}.
In particular, the expected time to simulate $(S_{1},\ldots ,S_{\tau ^{+}})$
is finite under $\mathbb{P}_{x}^{\theta }$ for any $x<0$.

The following proposition is the key to our algorithm.

\begin{proposition}
\label{Prop_BL_SI_1} Let $x<0$. Suppose there exists some $\theta>0$ with $%
\mathbb{E}[\exp(\theta S_1)]=1$. With $U$ being a standard uniform random
variable independent of $(S_n)$ under $\mathbb{P}_x^\theta$, we have the
following:

\begin{enumerate}
\item The law of $\mathbf{1}(\tau^+<\infty)$ under $\mathbb{P}_x$ equals the
law of $\mathbf{1}(U\le \exp(-\theta (S_{\tau^+}-x)))$ under $\mathbb{P}%
^\theta_x$.

\item The law of $\tau^+$ given $\tau^+<\infty$ under $\mathbb{P}_x$ equals
the law of $\tau^+$ given $U\le \exp (-\theta (S_{\tau^+} -x))$ under $%
\mathbb{P}_x^\theta$.

\item For any $k\ge 1$, the law of $(S_{1} ,...,S_k)$ given $\tau^+=k$ under
$\mathbb{P}_x$ equals the law of $(S_{1} ,\ldots ,S_k)$ given $U\leq \exp
(-\theta (S_{\tau^+}-x) ) $ and $\tau^+=k$ under $\mathbb{P}_x^\theta$.
\end{enumerate}
\end{proposition}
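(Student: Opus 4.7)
The plan is to derive all three claims from a single identity: for any bounded measurable function $f$ on the path space up to a stopping time and any $n\ge 1$,
\[
\mathbb{E}_x\!\left[f(S_1,\ldots,S_{\tau^+})\mathbf{1}(\tau^+\le n)\right]
=\mathbb{E}_x^\theta\!\left[f(S_1,\ldots,S_{\tau^+})\exp(-\theta(S_{\tau^+}-x))\mathbf{1}(\tau^+\le n)\right].
\]
This is just the change-of-measure relation applied on $\{\tau^+=k\}\in\sigma(S_1,\ldots,S_k)$ and summed over $k\le n$. Letting $n\to\infty$ and using dominated convergence (the weight $\exp(-\theta(S_{\tau^+}-x))$ lies in $(0,1]$ since $S_{\tau^+}\ge 0>x$) gives, for any bounded measurable $f$,
\[
\mathbb{E}_x\!\left[f(S_1,\ldots,S_{\tau^+})\mathbf{1}(\tau^+<\infty)\right]
=\mathbb{E}_x^\theta\!\left[f(S_1,\ldots,S_{\tau^+})\exp(-\theta(S_{\tau^+}-x))\right],
\]
where on the right I also use that $\tau^+<\infty$ $\mathbb{P}_x^\theta$-a.s.\ because the walk has positive drift under the tilted measure.

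For part (1), I would take $f\equiv 1$ in the master identity, giving $\mathbb{P}_x(\tau^+<\infty)=\mathbb{E}_x^\theta[\exp(-\theta(S_{\tau^+}-x))]$. Conditioning on $(S_n)$ under $\mathbb{P}_x^\theta$ and using independence of $U$, the right-hand side equals $\mathbb{P}_x^\theta(U\le\exp(-\theta(S_{\tau^+}-x)))$, which is the claim.

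For part (3), I would take $f(S_1,\ldots,S_{\tau^+})=\mathbf{1}(S_1\in A_1,\ldots,S_k\in A_k)\mathbf{1}(\tau^+=k)$ for Borel sets $A_1,\ldots,A_k$. The master identity gives
\[
\mathbb{P}_x\!\left((S_1,\ldots,S_k)\in A_1\times\cdots\times A_k,\tau^+=k\right)
=\mathbb{E}_x^\theta\!\left[\exp(-\theta(S_k-x))\mathbf{1}(\tau^+=k)\prod_{j=1}^k\mathbf{1}(S_j\in A_j)\right].
\]
Dividing both sides by $\mathbb{P}_x(\tau^+=k)$ (the $f\equiv\mathbf{1}(\tau^+=k)$ specialization) and recognizing the right-hand side, after inserting the auxiliary $U$, as $\mathbb{P}_x^\theta((S_1,\ldots,S_k)\in A_1\times\cdots\times A_k\mid U\le\exp(-\theta(S_{\tau^+}-x)),\tau^+=k)\cdot\mathbb{P}_x^\theta(U\le\exp(-\theta(S_{\tau^+}-x)),\tau^+=k)/\mathbb{P}_x(\tau^+=k)$, identifies the conditional laws. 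Part (2) is the marginal of part (3) on $\tau^+$ alone, so it comes for free once (3) is proved, and one can equally well derive it by using $f=\mathbf{1}(\tau^+=k)$ directly.

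The only mildly delicate step is ensuring that the event $\{U\le\exp(-\theta(S_{\tau^+}-x))\}$ under $\mathbb{P}_x^\theta$ carries the right normalizing mass, i.e.\ that dividing both sides by the appropriate normalizing constants really yields \emph{conditional} laws that agree; this follows immediately once part (1) is in hand, since it shows the two normalizing constants are equal. Everything else is routine bookkeeping with the Radon--Nikodym derivative on the pre-$\tau^+$ $\sigma$-fields.
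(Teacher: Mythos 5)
Your proposal is correct and follows essentially the same route as the paper: the paper's proof consists precisely of your ``master identity'' written out for cylinder sets $\{S_1\in B_1,\ldots,S_k\in B_k,\tau^+=k\}$, with the auxiliary uniform $U$ turning the weight $\exp(-\theta(S_{\tau^+}-x))\in(0,1]$ into an indicator, and all three claims read off from that identity together with $\tau^+<\infty$ a.s.\ under $\mathbb{P}_x^\theta$. Your additional remarks on dominated convergence and on the equality of normalizing constants are just a more explicit rendering of the paper's ``all claims are elementary consequences'' step.
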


\begin{proof}
For any integer $k\geq 1$ and Borel sets $B_{1},B_{2},\ldots ,B_{k}$, we
have
\begin{eqnarray*}
\lefteqn{\mathbb{P}_{x}\left( S_{1}\in B_{1},\ldots ,S_{k}\in B_{k},\tau
^{+}=k\right) } \\
&=&\mathbb{E}_{x}^{\theta }\big[\exp (-\theta \,(S_{k}-x))\mathbf{1}%
(S_{1}\in B_{1},\ldots ,S_{k}\in B_{k},\tau ^{+}=k)\big] \\
&=&\mathbb{E}_{x}^{\theta }\big[\mathbf{1}(U\leq \exp (-\theta \,(S_{\tau
^{+}}-x)))\,\mathbf{1}(S_{1}\in B_{1},\ldots ,S_{k}\in B_{k},\tau ^{+}=k)%
\big].
\end{eqnarray*}%
All claims are elementary consequences of this identity, upon noting that $%
\tau^{+}<\infty $ under $\mathbb{P}_{x}^{\theta }$.
\end{proof}

This proposition immediately yields the following algorithm.

\bigskip

\textbf{Function} \textsc{SampleUpcrossing}($x$): \textbf{Samples }$%
(S_{1},\ldots ,S_{\tau ^{+}})$\textbf{\ under }$\mathbb{P}_{x}$\textbf{\ for
}$x<0$

Step 1: $S\leftarrow $ sample $(S_{1},\ldots ,S_{\tau ^{+}})$ under $\mathbb{%
P}_{x}^{\theta }$

Step 2: $U\leftarrow $ sample a standard uniform random variable

Step 3: If {$U\leq \exp (-\theta (S_{\tau ^{+}}-x))$}

Step 4:\qquad Return $S$

Step 5: Else

Step 6:\qquad Return `degenerate'

Step 7: EndIf

Step 8: EndFunction


\subsection{Beyond $N_S$}

\label{sec:beyondNS} We next describe how to sample $(S_1,\ldots, S_{\tonnote{\ell}})$ from
$\mathbb{P}_{x}$ conditionally on $\tau ^{+}=\infty $ for $x<0$. Because $\tau
^{+}=\infty $ is equivalent to $\sup_{k\leq \ell }S_{k}<0$ and $\sup_{k>\ell
}S_{k}<0$ for any $\ell \geq 1$, after sampling $S_{1},\ldots ,S_{\ell }$,
by the Markov property we can use \textsc{SampleUpcrossing}$(S_{\ell })$ to
verify whether or not $\sup_{k>\ell }S_{k}<0$. This observation immediately
yields an acceptance/rejection algorithm that achieves our goal.

\bigskip

\textbf{Function }\textsc{SampleWithoutRecordS}$\left( {x,\ell }\right) $%
\textbf{: Samples }$(S_1,\ldots,S_\ell)$\textbf{\ from }$\mathbb{P}_{x}$%
\textbf{\ given }$\tau ^{+}=\infty $\textbf{\ for }$\ell \geq 1$\textbf{, }$%
x<0$

Step 1: Repeat

Step 2:$\qquad S\leftarrow $ sample $(S_1,\ldots,S_\ell)$ under $\mathbb{P}%
_{x}$

Step 3: Until $\sup_{1\leq k\leq \ell }S_{k}<0$ and \textsc{SampleUpcrossing}%
$(S_{\ell })$ is `degenerate'

Step 4: Return $S$

Step 5: EndFunction

\subsection{Sampling a random walk until a last passage time}

\label{sec:fullalgorithmA} We summarize our findings in this section in our
full algorithm for sampling $(S_{0},\ldots ,S_{N_{S}+\ell })$ under $\mathbb{%
P}$ given some $\ell \geq 0$. The validity of the algorithm is a direct
consequence of the strong Markov property.

\subparagraph{\textbf{Algorithm S: Samples }$(S_{0},\ldots ,S_{N_{S}+\ell })$%
\textbf{\ under }$\mathbb{P}$\textbf{\ for }$\ell \geq 0$}

\label{alg:N_A}

\# We use $S_{\text{end}}$ to denote the last element of $S$.

Step 1: $S\leftarrow \lbrack 0]$

Step 2: \ Repeat

Step 3:\qquad DowncrossingSegment $\leftarrow $ \textsc{SampleDowncrossing}$%
(S_{\text{end}})$

Step 4:$\qquad S\leftarrow \lbrack S,$ Do$\text{wncrossingSegment}]$

Step 5:\qquad UpcrossingSegment $\leftarrow $\textsc{SampleUpcrossing}$(S_{%
\text{end}})$

Step 6:\qquad If{\ UpcrossingSegment is not `degenerate'}

Step 7:$\qquad \qquad S\leftarrow \lbrack S,\text{UpcrossingSegment}]$

Step 8: \qquad EndIf

Step 9: \ \ Until {UpcrossingSegment is `degenerate'}

Step 10: If {$\ell >0$}

Step 11: $\qquad S\leftarrow \lbrack S,$ \textsc{SampleWithoutRecordS}$(S_{%
\text{end}},\ell )]$

Step 12: EndIf

\section{Record-Breaker Technique for the Maximum of a Gaussian Field}\setcounter{equation}{0}

\label{SECTION_MILESTONE}

After the excursion to random walks in Section~\ref{Section_Milestone_RW_new}
we return to the main theme of this paper. In particular, we stick to the
notation and assumptions of Section~1-3. Define \tonnote{$\eta_0=n_0$ for some fixed $n_0$ to be defined later.}
Let $(X_n)_{n\ge 1}$ be iid copies of $X$ and define, for $i\ge 1$, a
sequence of \emph{record-breaking times} $(\eta_i)$ through
\begin{equation*}
\eta_i =
\begin{cases}
\inf\{n>\eta_{i-1}: \overline X_n> a\log n +C\} & \text{if }
\eta_{i-1}<\infty \\
\infty & \text{otherwise.}%
\end{cases}%
\end{equation*}

It is the aim of this section to develop a sampling algorithm for $%
(X_1,\ldots,X_{N_X+\ell})$ for any fixed $\ell\ge 0$, where
\begin{equation*}
N_X = \max\{\eta_i: \eta_i<\infty\}.
\end{equation*}

Here and in what follows, we write $X_i$ for \tonnote{a} sample path at the given points
$t_1, \ldots, t_d \in T$. 
Section~\ref{sec:singlerecord} first discusses an algorithm to sample $(X_n)$
up to a single record. For this algorithm to work, $n_0$ needs to be large
enough so that $\mathbb{P}(\overline X>a\log n+C)$ is controlled for every $%
n>n_0$; the choice of $n_0$ is also discussed in Section~\ref%
{sec:singlerecord}. Section~\ref{sec:beyondNX} describes how to sample $%
(X_n) $ beyond the last record-breaking time. Section~\ref%
{sec:fullalgorithmX} presents our algorithm for sampling $(X_1,\ldots,
X_{N_X+\ell})$.

\subsection{Breaking a single record}

\label{sec:singlerecord} \ignore{Fix some integer $n_{0}\geq 0$, and define}\tonnote{We define for $n\ge n_0$,}
\begin{equation*}
T_{\tonnote{n}}=\inf \{k\geq 1:\overline{X}_{k}>a\log (\tonnote{n}+k)+C\}.
\end{equation*}%
We describe an algorithm that outputs `degenerate' if $T_{\tonnote{n}}=\infty $
and $(X_{1},\ldots ,X_{T_{\tonnote{n}}})$ if $T_{\tonnote{n}}<\infty $. Ultimately, the
strategy is based on \thomasnote{acceptance/rejection.} We will eventually sample $%
T_{\tonnote{n}}$ given \ignore{that}$T_{\tonnote{n}}<\infty $ using a suitable random variable $K$ \tonnote{as a proxy}\ignore{\thomasnote{too many ``proposal'', proxy?} }
with probability mass function $g_{n_{0}}$ \ignore{as a proposal}, \zhipengnote{which we discuss later in this subsection}. 
\ignore{However, in}\tonnote{In} order to apply this 
\thomasnote{acceptance/rejection} strategy, we need to introduce
auxiliary sampling distributions.

Our algorithm makes use of a measure $\mathbb{P}^{(n)}$ that is designed to
appropriately approximate the conditional distribution of $X$ given $%
\overline{X}>a\log n+C$, which is defined through
\begin{equation*}
\frac{d\mathbb{P}^{(n)}}{d\mathbb{P}}(x)=\frac{\sum_{i=1}^{d}\mathbf{1}%
(x(t_{i})>a\,\log n+C)}{\sum_{i=1}^{d}{\mathbb{P}(X(t_{i})>a\,\log n+C)}}.
\end{equation*}%
\thomasnote{For any index $j\in \{1,\ldots ,d\}$ and  $t\in
\{t_{1},\ldots ,t_{d}\}$, define
$w^{j}(t)=\mathrm{Cov}(X(t),X(t_{j}))/\mathrm{Var}(X(t_{j}))$. Since $X$ is
centered Gaussian
$X(t)-w^{j}X(t_{j})$ and $X(t_j)$ are uncorrelated, hence independent.} 
\ignore{
For any given 
index $j\in \{1,\ldots ,d\}$, the vector $X-w^{j}X(t_{j})$ 
is independent of $X(t_{j})$\ignore{ \thomasnote{delete:}\tonnote{since $X$ is Gaussian}, where $w^{j}$ is defined as $%
w^{j}(t)=\mathrm{Cov}(X(t),X(t_{j}))/\mathrm{Var}(X(t_{j}))$ for $t\in
\{t_{1},\ldots ,t_{d}\}$,} }
Now one readily verifies that the following algorithm
outputs samples from $\mathbb{P}^{(n)}$. \thomasnote{Here and in what follows,} 
$\Phi$ is the \thomasnote{standard normal distribution function.}

\bigskip

\textbf{Function } \textsc{ConditionedSampleX} $\left(a,C,n\right)$\textbf{:
Samples $X$ from} $\mathbb{P}^{(n)}$

Step 1: $\nu\gets$ sample with probability mass function
\begin{equation*}
\mathbb{P}(\nu =j)=\frac{\mathbb{P}(X(t_j)>a\log n+C)}{\sum_{i=1}^{d}{%
\mathbb{P}(X(t_i)>a\log n+C)}}
\end{equation*}

Step 2: $U\gets $ sample a standard uniform random variable

Step 3: $X(t_\nu)\gets \sigma(t_\nu)\Phi^{-1}\left(U + (1-U)\Phi\left(\frac{%
a\log n+C}{\sigma(t_\nu)}\right)\right)$ \# Conditions on $X(t_\nu)>a\log
n+C $

Step 4: $Y\gets $ sample of $X$ under $\mathbb{P}$

Step 5: Return $Y- w^\nu Y(t_\nu) + \zhipengnote{w^\nu} X(t_\nu)$

Step 6: EndFunction

\bigskip

We are now ready to see how \textsc{ConditionedSampleX} is used to sample
until the first record.

\bigskip

\textbf{Function} \textsc{SampleSingleRecord} $\left(a,C,\ignore{n_0,n_1}\tonnote{n}\right)$%
\textbf{: Samples} $(X_1,\ldots,X_{T_{\tonnote{n}}})$ \textbf{for} $a\in (0,1], C\in\mathbb{R}, 
\ignore{n_1\ge n_0\ge 0}\tonnote{n\ge n_0}$

Step 1: $K\gets$ sample from pmf $g_{n_0}$

Step 2: $(X_1,\ldots,X_{K-1})\gets $ iid sample under $\mathbb{P}$

Step 3: $X_K\gets \textproc{ConditionedSampleX}(a,C,\tonnote{n}+K)$

Step 4: $U\gets$ sample a standard uniform random variable

Step 5: If {$\overline X_k \le a \log (\tonnote{n}+k) + C$ for $k=1,\ldots,K-1$ and 
$U\, {g_{n_0}(K)} \leq {d\mathbb{P}/d\mathbb{P}^{(\tonnote{n}+K)}(X_K)}$}

Step 6: \qquad Return $(X_1,\ldots,X_K)$

Step 7: Else

Step 8: \qquad Return `degenerate'

Step 9: EndIf

Step 10: EndFunction

\bigskip

The following proposition shows that \textproc{SampleSingleRecord} achieves
the desired goal.

\begin{proposition}
\label{prop:distributionSingleRecord} \thomasnote{Assume the  condition
\begin{eqnarray}\label{eq:star}
\sum_{i=1}^{d}{\mathbb{P}
(X(t_{i})>a\log(n_0+k)+C)} \le g_{n_0}(k)\quad\mbox{for $k\ge 1$.}
\end{eqnarray}} 
For $\tonnote{n} \ge n_0$,
if $(\widetilde X_1, \ldots, \widetilde X_{\widetilde T})$ has the
distribution of the output of \textproc{SampleSingleRecord} conditioned on
not being `degenerate,' then we have

\begin{enumerate}
\item the algorithm \textproc{SampleSingleRecord} returns 'degenerate' with
probability $\mathbb{P}(T_{\tonnote{n}}=\infty)$,

\item the length $\widetilde T$ has the same distribution as $T_{\tonnote{n}}$  given $T_{\tonnote{n}}< \infty$, and

\item the distribution of $(\widetilde X_1, \ldots, \widetilde X_{\widetilde
T})$ given $\widetilde T=\ell$ is the same as the distribution of $(X_1,\ldots,X_\ell)$ given $T_{\tonnote{n}}=\ell$.
\end{enumerate}
\end{proposition}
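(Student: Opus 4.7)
The plan is to check that the joint law of the accepted output of \textsc{SampleSingleRecord} matches that of $(X_1,\ldots,X_{T_n})$ conditioned on $T_n<\infty$ by a direct acceptance/rejection computation. The argument has three pieces: (i) verifying that \textsc{ConditionedSampleX} returns a draw from $\mathbb{P}^{(n+K)}$; (ii) checking that the acceptance ratio in Step 5 is well-defined; (iii) computing the joint probability of the accepted event and matching it to the target distribution.

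For (i), I would observe that $\mathbb{P}^{(n+K)}$ is generated by a size-biasing recipe: pick an index $\nu$ with probability proportional to $\mathbb{P}(X(t_j)>a\log(n+K)+C)$, draw $X(t_\nu)$ from its marginal law conditional on exceeding the threshold, and then sample the remaining coordinates from the conditional Gaussian law of $X$ given $X(t_\nu)$. Since $X$ is centered Gaussian, $X-w^\nu X(t_\nu)$ is independent of $X(t_\nu)$; thus the reconstruction $Y-w^\nu Y(t_\nu)+w^\nu X(t_\nu)$ with independent $Y\sim \mathbb{P}$ reproduces the required conditional draw, matching the density defining $\mathbb{P}^{(n+K)}$.

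For (ii) and (iii), on the support $\{\overline x>a\log(n+K)+C\}$ of $\mathbb{P}^{(n+K)}$,
\begin{equation*}
\frac{d\mathbb{P}}{d\mathbb{P}^{(n+K)}}(x)=\frac{\sum_i \mathbb{P}(X(t_i)>a\log(n+K)+C)}{\sum_i \mathbf{1}(x(t_i)>a\log(n+K)+C)}\le \sum_i \mathbb{P}(X(t_i)>a\log(n+K)+C)\le g_{n_0}(K),
\end{equation*}
by the hypothesis \eqref{eq:star} together with monotonicity in $n\ge n_0$; hence Step 5 is equivalent to $U\le (d\mathbb{P}/d\mathbb{P}^{(n+K)}(X_K))/g_{n_0}(K)\in[0,1]$. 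Using independence of the sampling steps and the change of measure $\mathbb{E}_{\mathbb{P}^{(n+\ell)}}[f(X)\,d\mathbb{P}/d\mathbb{P}^{(n+\ell)}(X)]=\mathbb{E}_{\mathbb{P}}[f(X)\mathbf{1}(\overline X>a\log(n+\ell)+C)]$, a direct calculation then gives, for any $\ell\ge 1$ and Borel sets $B_1,\ldots,B_\ell$,
\begin{equation*}
\mathbb{P}(\mathrm{accept},\,K=\ell,\,X_i\in B_i\text{ for }i\le \ell) = \mathbb{P}(X_i\in B_i\text{ for }i\le \ell,\,T_n=\ell).
\end{equation*}
Summing over $\ell$ with $B_i=\mathbb{R}^{d}$ yields (1) and (2), while conditioning on acceptance yields (3).

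The main obstacle is the careful bookkeeping around the mutual singularity of $\mathbb{P}$ and $\mathbb{P}^{(n+K)}$ off the exceedance event and making sure the factor $g_{n_0}(\ell)$ from the proposal pmf cancels precisely against the denominator of the acceptance ratio; once everything is restricted to the event $\{\overline X_\ell > a\log(n+\ell)+C\}$, the identity is the standard acceptance/rejection relation.
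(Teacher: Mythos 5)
Your proposal is correct and follows essentially the same route as the paper: the paper's proof is exactly the single acceptance/rejection identity you compute in step (iii), justified by the same observation that the hypothesis \eqref{eq:star} together with $n\ge n_0$ forces $d\mathbb{P}/d\mathbb{P}^{(n+k)}(x)\le g_{n_0}(k)$ on the support of $\mathbb{P}^{(n+k)}$, so that the acceptance probability given $X_K$ is precisely the likelihood ratio divided by $g_{n_0}(K)$ and the factor $g_{n_0}(\ell)$ from the proposal pmf cancels. Your additional verification that \textsc{ConditionedSampleX} samples from $\mathbb{P}^{(n+K)}$ via the size-biased index is the part the paper dismisses as ``one readily verifies,'' and your working with arbitrary Borel sets rather than sets restricted to $A_i^c$ and $A_k$ is an immaterial variation.
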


\begin{proof}
Write $A_m = \{x\in \mathbb{R}^d: \max_i x_i > a\log(\tonnote{n}+m)+C\}$ for $m\ge 1$%
. For $B_1\subset A_1^c,\ldots,B_{k-1}\subset A_{k-1}^c$ and $B_k\subset A_k$,
we have
\begin{eqnarray*}
\lefteqn{\mathbb{P}(\widetilde X_1\in B_1,\ldots, \widetilde X_{k-1}\in
B_{k-1},\widetilde X_k \in B_{k}, \widetilde T=k)} \\
&=& \mathbb{P}(K=k) \mathbb{P}( X\in B_1)\cdots \mathbb{P}( X\in
B_{k-1}) \\
&&\mbox{}\times \mathbb{P}^{(\tonnote{n}+k)}\left( U g_{n_0}(k) \leq \frac{d\mathbb{P%
}}{d\mathbb{P}^{(\tonnote{n}+k)}}(X), X \in B_{k}\right) \\
&=& \mathbb{P}(K=k) \mathbb{P}( X\in B_1)\cdots \mathbb{P}( X\in
B_{k-1}) \\
&&\mbox{}\times \mathbb{E}^{(\tonnote{n}+k)}\left(\frac1{g_{n_0}(k)} \,\frac{d%
\mathbb{P}}{d\mathbb{P}^{(\tonnote{n}+k)}}(X)  I(X \in B_{k})\right) \\
&=& g_{n_0}(k) \mathbb{P}( X\in B_1)\cdots \mathbb{P}( X \in
B_{k-1}) \frac{\mathbb{P}(X\in B_{k})}{g_{n_0}(k)} \\
&=& \mathbb{P}( X_1\in B_1,\ldots, X_{k}\in B_{k}, T_{\tonnote{n}}=k),
\end{eqnarray*}
and \ignore{both}\zhipengnote{all} claims follow from this identity. The second equality follows from
the assumption, which implies that $d\mathbb{P}/d\mathbb{P}^{(\tonnote{n}+k)}(x) /
g_{n_0}(k)$ is bounded by 1 for all $k\ge 1$ and $x\in \mathbb{R}^d$.
\end{proof}

\subsection*{Choosing $n_0$ and the density $g_{n_0}$}

We start with $g_{n_0}$, guided by \ignore{the condition in Proposition~\ref{prop:distributionSingleRecord}} \thomasnote{\eqref{eq:star}} 
and the requirement that we need to sample
from $g_{n_0}$. \tonnote{The random variable $K$ is a proxy for the first-record epoch $T_{n_0}$,
the distribution of which we can approximate with a union-bound. This leads to the idea to use} \ignore{A natural choice is}, for $k\ge 1$,
\begin{eqnarray}  \label{eq:defg}
g_{n_0}(k) = \frac{\int_{k-1}^k \phi((a\log(n_0+s)+C)/\overline\sigma)ds} {%
\int_0^\infty \phi((a\log(n_0+s)+C)/\overline\sigma)ds},
\end{eqnarray}
where $\phi(\cdot)$ is the density function of the standard normal
distribution, $\overline\sigma^2=\max_{t \in T}\mathrm{Var}%
(X(t))$. The following lemma resolves the sampling question.

\begin{lemma}
\label{lem:gsample} Let $U$ be a uniform random variable on $(0,1)$. The
quantity
\begin{equation*}
\left\lceil \exp\left\{\frac{\overline\sigma^2}{a^2}-\frac{C}{a}+\frac{%
\overline\sigma}a \overline{\Phi}^{-1}\left(U \,\overline{\Phi}\left(\frac{%
a\log n_0+C}{\overline\sigma}-\frac{\overline\sigma}a\right)\right)\right\}
-n_0 \right\rceil
\end{equation*}
\ignore{is a random variable with} \thomasnote{has} probability mass function $g_{n_0}$, where $%
\lceil\cdot\rceil$ is the round-up operator, \thomasnote{$\overline{\Phi}%
=1-\Phi$,} and $\overline{\Phi}^{-1}$ is the inverse of $%
\overline{\Phi}$.
\end{lemma}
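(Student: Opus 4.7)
My plan is to first construct an absolutely continuous random variable $S$ on $(0,\infty)$ whose density $f$ is proportional to $\phi((a\log(n_0+s)+C)/\overline\sigma)$, and then observe that $K:=\lceil S\rceil$ automatically has the desired pmf, since by direct integration
\begin{equation*}
\mathbb P(K=k)=\mathbb P(k-1<S\le k)=\int_{k-1}^{k} f(s)\,ds=g_{n_0}(k).
\end{equation*}
Thus the entire task reduces to sampling $S$ via inverse transform, and the closed-form formula in the statement is precisely what inverse transform produces after one change of variables.

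The key algebraic step is a change of variable $y=(a\log(n_0+s)+C)/\overline\sigma$, so that $s=\exp((\overline\sigma y-C)/a)-n_0$ and $ds=(\overline\sigma/a)\exp((\overline\sigma y-C)/a)\,dy$. The image of $(0,\infty)$ under this map is $[y_{\min},\infty)$ with $y_{\min}=(a\log n_0+C)/\overline\sigma$. The main computation I would then carry out is completing the square inside the exponent:
\begin{equation*}
\phi(y)\exp\bigl((\overline\sigma y-C)/a\bigr)=\exp\bigl(\overline\sigma^2/(2a^2)-C/a\bigr)\,\phi(y-\overline\sigma/a).
\end{equation*}
This identifies the pushforward density of $Y:=(a\log(n_0+S)+C)/\overline\sigma$ (up to a constant) as that of a $N(\overline\sigma/a,1)$ variable truncated to $[y_{\min},\infty)$. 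Equivalently, $Z:=Y-\overline\sigma/a$ is a standard normal truncated to $[z_0,\infty)$, where $z_0=(a\log n_0+C)/\overline\sigma-\overline\sigma/a$.

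The final step is standard inverse transform for a truncated standard normal: for $z\ge z_0$ one has $\mathbb P(Z>z)=\overline\Phi(z)/\overline\Phi(z_0)$, and setting this equal to $U$ with $U\sim U(0,1)$ gives $Z=\overline\Phi^{-1}\bigl(U\,\overline\Phi(z_0)\bigr)$. Substituting back into $S=\exp((\overline\sigma(Z+\overline\sigma/a)-C)/a)-n_0$ yields exactly the expression inside the ceiling in the statement, and then $K=\lceil S\rceil$ finishes the argument. I do not anticipate any genuine obstacle here: everything is a routine inverse-transform computation, and the only step that requires a bit of care is keeping the constants in the change of variables straight while completing the square, and recognizing that the Jacobian factor $\exp((\overline\sigma y-C)/a)$ combines with the Gaussian weight $\phi(y)$ to shift the mean to $\overline\sigma/a$.
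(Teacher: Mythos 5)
Your proposal is correct and rests on exactly the same computation as the paper's proof: the substitution $t=\log x$ together with completing the square, which identifies $\int_y^\infty \phi((a\log x+C)/\overline\sigma)\,dx$ with a constant times $\overline\Phi((a\log y+C)/\overline\sigma-\overline\sigma/a)$. The only difference is direction of travel --- you derive the formula constructively as inverse-transform sampling of a truncated normal, whereas the paper verifies it by matching the tail probabilities $\mathbb{P}(\lceil\exp(f_{n_0}(U))-n_0\rceil\ge k)$ against $\sum_{m\ge k}g_{n_0}(m)$ --- so this is essentially the same proof.
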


\begin{proof}
Write $f_{n_0}(U)$ for the expression inside the exponential operator. For $%
k\ge 1$, we have
\begin{eqnarray*}
\mathbb{P}(\lceil \exp(f_{n_0}(U))-n_0\rceil\ge k) &=& \mathbb{P}(
f_{n_0}(U) > \log(n_0+k-1)) \\
&=& \frac{\overline{\Phi}\left({(a\log (n_0+k-1)+C)}/{\overline\sigma}-{%
\overline\sigma}/a\right)}{\overline{\Phi}\left({(a\log n_0+C)}/{%
\overline\sigma}-{\overline\sigma}/a\right)},
\end{eqnarray*}
so it remains to show that this equals
\begin{equation*}
\sum_{m\ge k}g_{n_0}(m) = \frac{\int_{n_0+k-1}^\infty \phi((a\log
x+C)/\overline\sigma)dx}{\int_{n_0}^\infty \phi((a\log
x+C)/\overline\sigma)dx}.
\end{equation*}
To see this, we note that, for $y>0$,
\begin{eqnarray}\label{eq:2star}
\int_y^\infty \phi((a\log(x)+C)/\overline\sigma) dx &=& \frac 1{\sqrt{2\pi}
}\int_{\log y}^\infty \exp\left(-\frac {(at+C)^2}{2\overline\sigma^2} + t
\right) dt\nonumber \\
&=& \frac{ e^{-C/a}}{\sqrt{2\pi}\phi(\overline\sigma/a)/(\overline\sigma/a)}
\times \overline{\Phi}((a\log y+C)/\overline\sigma-\overline\sigma/a)\\
&=& \thomasnote{r(y)\,,}\nonumber
\end{eqnarray}
and we thus obtain the claim.
\end{proof}

The next lemma shows that, for large enough $n_0$, \thomasnote{the choice of $g_{n_0}$ as in \eqref{eq:defg}}
ensures that \ignore{the condition from Proposition~\ref%
{prop:distributionSingleRecord}} \thomasnote{\eqref{eq:star}} 
is satisfied. \thomasnote{The lemma} also shows how $\mathbb{P}%
(T_{\tonnote{n}}<\infty)$ for $\tonnote{n} \ge n_0$ can be controlled explicitly.

\begin{proposition}
\label{prop:choiceL} \ignore{Let $\delta\in(0,1)$ be given.} 
If $n_0$ satisfies 
$a\log n_0 + C\ge \overline \sigma$ and \thomasnote{$d\,r(n_0)\le \delta$ for 
a given $\delta\in (0,1)$, } \ignore{
\begin{equation*}
{de^{-C/a}} {\overline\Phi\left(\frac{a\log n_0+C}{\overline\sigma}-\frac{%
\overline\sigma}a\right)}\le \delta \sqrt{2\pi}{\frac{\phi(\overline\sigma/a)%
}{\overline\sigma/a}},
\end{equation*}}
then \ignore{the condition in Proposition~\ref{prop:distributionSingleRecord}} \thomasnote{\eqref{eq:star}} is
satisfied and \textproc{SampleSingleRecord}$(a,C, \tonnote{n})$ returns
`degenerate' \thomasnote{at least} with probability $1-\delta$.
\end{proposition}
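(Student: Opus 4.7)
The plan is to verify the two claims separately. For condition \eqref{eq:star}, I would first apply the Gaussian tail comparison: since $X(t_i)$ is centered Gaussian with variance $\sigma(t_i)^2\le\overline\sigma^2$ and $\overline\Phi$ is decreasing on $[0,\infty)$, I get $\mathbb{P}(X(t_i)>a\log(n_0+k)+C) \le \overline\Phi(y_k)$ for each $i$, where $y_k := (a\log(n_0+k)+C)/\overline\sigma$; note $y_k\ge y_1 \ge (a\log n_0+C)/\overline\sigma \ge 1$ by hypothesis. Summing over $i$, the left-hand side of \eqref{eq:star} is at most $d\,\overline\Phi(y_k)$.

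Next, I would produce a matching lower bound for $g_{n_0}(k)$. After substituting $x=n_0+s$, $g_{n_0}(k)$ equals $\int_{n_0+k-1}^{n_0+k}\phi((a\log x+C)/\overline\sigma)\,dx\,/\,r(n_0)$. Because $a\log x+C\ge \overline\sigma\ge 0$ for $x\ge n_0$, the integrand is decreasing on $[n_0,\infty)$, which yields $g_{n_0}(k)\ge \phi(y_k)/r(n_0)$. So it suffices to show $d\,\overline\Phi(y_k)\le \phi(y_k)/r(n_0)$, i.e.\ $d\,r(n_0)\le \phi(y_k)/\overline\Phi(y_k)$. Mills' ratio $\overline\Phi(y)\le \phi(y)/y$ for $y>0$, together with $y_k\ge 1$, gives $\phi(y_k)/\overline\Phi(y_k)\ge y_k\ge 1$, and the assumption $d\,r(n_0)\le \delta<1$ closes the chain.

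For the second claim, Proposition \ref{prop:distributionSingleRecord}(i) identifies the `degenerate' probability with $\mathbb{P}(T_n=\infty)$, so it is enough to show $\mathbb{P}(T_n<\infty)\le \delta$ for $n\ge n_0$. The natural route is a union bound over $k$ and over the $d$ coordinates:
\[
\mathbb{P}(T_n<\infty) \le \sum_{k\ge 1}\sum_{i=1}^{d}\mathbb{P}(X(t_i)>a\log(n+k)+C) \le d\sum_{k\ge 1}\overline\Phi(y_k'),
\]
with $y_k':=(a\log(n+k)+C)/\overline\sigma\ge 1$. I then invoke $\overline\Phi(y)\le \phi(y)$ for $y\ge 1$ (again via Mills' ratio) and the monotonicity of $x\mapsto \phi((a\log x+C)/\overline\sigma)$ to compare the sum with an integral, obtaining $\sum_{k\ge 1}\phi(y_k') \le \int_0^\infty \phi((a\log(n+s)+C)/\overline\sigma)\,ds = r(n) \le r(n_0)$. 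Combining, $\mathbb{P}(T_n<\infty)\le d\,r(n_0)\le \delta$.

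The only delicate point is making sure that both the Mills-ratio bound and the monotonicity underlying the sum-to-integral comparison apply over the full range of $k$. Both rely on $y_k\ge 1$ (respectively $y_k'\ge 1$), which is exactly what the hypothesis $a\log n_0+C\ge \overline\sigma$ secures; once this has been extracted, the remaining steps are routine.
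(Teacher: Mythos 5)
Your argument is correct and follows essentially the same route as the paper: the same Gaussian tail comparison, the same bound $\overline\Phi(y)\le\phi(y)$ for $y\ge 1$ (which you phrase via Mills' ratio, using the hypothesis $a\log n_0+C\ge\overline\sigma$ exactly as the paper does), and the same comparison of $\phi(y_k)$ with the integral over $[k-1,k]$ whose normalization is $r(n_0)$. The only cosmetic difference is that for the second claim you bound $\sum_k\phi(y_k')$ directly by $r(n)\le r(n_0)$, whereas the paper reuses the per-$k$ bound $\delta\,g_{n_0}(k)$ and sums $g_{n_0}$ to one; the two computations are equivalent.
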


\begin{proof}
\thomasnote{Since  $\overline\Phi(x)\le \phi(x)$ for $x\ge 1$,  
$d\,r(n_0)\le \delta$, and in view
of \eqref{eq:2star}} we have
\begin{eqnarray}
\sum_{i=1}^d \mathbb{P}(X(t_i)>a\log(n_0+k)+C) &\le&
d\overline\Phi\left((a\log(n_0+k)+C)/\overline \sigma\right)\nonumber \\
&\le& d\phi\left((a\log(n_0+k)+C)/\overline \sigma\right) \nonumber\\
&\le& d \int_{k-1}^k \phi\left((a\log(n_0+s)+C)/\overline \sigma\right)ds \nonumber\\
&=& d \int_0^\infty \phi\left((a\log(n_0+s)+C)/\overline \sigma\right)ds\,
g_{n_0}(k) \nonumber\\
&=& \ignore{\frac{de^{-C/a}}{\sqrt{2\pi}} \frac{\overline\Phi((a\log
n_0+C)/\overline\sigma-\overline\sigma/a)}{\phi(\overline\sigma/a)/(%
\overline\sigma/a)}} \thomasnote{d\,r(n_0)\, g_{n_0}(k) <\delta\, g_{n_0}(k)\,.}\label{eq:3star}
\end{eqnarray}
\ignore{where the last equality is established in the proof of Lemma~\ref%
{lem:gsample}. By assumption, the factor in front of $g_{n_0}(k)$ is bounded
by $\delta$.} This proves the first claim.
\par
Applying Proposition~\ref{prop:distributionSingleRecord} and 
\thomasnote{\eqref{eq:3star} for every $k$,} \ignore{and summing the
preceding display over $k$, we find that} the probability that 
\textproc{SampleSingleRecord} does not return `degenerate' is \thomasnote{bounded as follows:}
\begin{eqnarray*}
\sum_{k=1}^\infty \mathbb{P}(T_{\tonnote{n}} = k) &\le& \sum_{k=1}^\infty
\sum_{i=1}^d \mathbb{P}(X(t_i)>a\log(\tonnote{n}+k)+C) \\
&\le& \sum_{k=1}^\infty \sum_{i=1}^d \mathbb{P}(X(t_i)>a\log(n_0+k)+C) \\
&\thomasnote{<}& \ignore{\frac{de^{-C/a}}{\sqrt{2\pi}} \frac{\overline\Phi((a\log
n_0+C)/\overline\sigma-\overline\sigma/a)}{\phi(\overline\sigma/a)/(%
\overline\sigma/a)}} \thomasnote{\delta\,\sum_{k=1}^\infty g_{n_0}(k)=\delta,}
\end{eqnarray*}
which proves the second claim.
\end{proof}

\subsection{Beyond $N_X$}

\label{sec:beyondNX} We next describe how to sample $(X_1,\ldots,X_n)$
conditionally on $T_{\tonnote{n}}=\infty$. As in Section~\ref{sec:beyondNS} we use
an acceptance/rejection algorithm, but we have to modify the procedure
slightly because we work with a sequence of iid random fields instead of a
random walk.

\bigskip

\textbf{Function} \textsc{SampleWithoutRecordX} $\left(\tonnote{n},\ell\right)$
\textbf{: Samples} $(X_1,\ldots,X_\ell)$ \textbf{conditionally on} $%
T_{\tonnote{n}}=\infty$ for $\ell\ge 1$

Step 1: Repeat

Step 2: \qquad $X \gets$ sample $(X_1,\ldots,X_\ell)$ under $\mathbb{P}$

Step 3: Until $\sup_{1\le k\le \ell} [X_k- a\log (\tonnote{n}+k)]<C$

Step 4: Return $X$

Step 5: EndFunction

\subsection{The full algorithm}

\label{sec:fullalgorithmX} We summarize our findings in this section in our
full algorithm for sampling $(X_1,\ldots, X_{N_X+\ell})$ under $\mathbb{P}$
given some $\ell \ge 0$.

The idea is to successively apply \textproc{SampleSingleRecord} to generate
the $\eta_i$ from the beginning of this section. Starting from $\eta_0=n_0$
satisfying the requirements in Proposition~\ref{prop:choiceL}, \tonnote{we generate $T_n$ where $n$ is}
replaced by each of the subsequent $\eta_i$. As a result, we have $\mathbb{P}%
(\eta_i=\infty|\eta_{i-1}<\infty) \ge 1-\delta$ by Proposition~\ref{prop:choiceL}. 
Thus, the number of records is bounded in probability by a
geometric random variable with parameter $1-\delta$.

\bigskip

\subparagraph{\textbf{Algorithm X: Samples }$(X_1,\ldots, X_{N_X+\ell})$
\textbf{given} $a \in (0,1]$, $\delta\in(0,1)$, $C\in\mathbb{R}$, $\overline%
\protect\sigma>0$, $\ell\ge 0$}

\label{alg:N_X}

\# {$n_0$ must satisfy the requirements in Proposition~\ref{prop:choiceL}. }

Step 1: $X\gets [\,]$, $\eta\gets n_0$

Step 2: $X \gets$ sample $(X_1,\ldots,X_\eta)$ under $\mathbb{P}$

Step 3: Repeat

Step 4: \qquad segment $\gets \textproc{SampleSingleRecord}(a,C,\ignore{n_0,} \eta)$

Step 5: \qquad If {segment is not `degenerate'}

Step 6: \qquad \qquad $X \gets [X, \text{segment}]$

Step 7: \qquad \qquad $\eta \gets \text{length}(X)$

Step 8: \qquad EndIf

Step 9: Until {segment is `degenerate'}

Step 10: If {$\ell>0$}

Step 11: \qquad $X\gets[X,\textproc{SampleWithoutRecordX}(\eta,\ell)]$

Step 12: EndIf

\section{Final Algorithm and Proof of Theorem~\protect\ref{Thm_Main}\label{sec:complexity}}

In this section, we give our final algorithm. We also \thomasnote{provide} 
the remaining arguments showing why \thomasnote{the algorithm outputs} \ignore{produces}exact samples and prove a bound on the
computational complexity. Together these proofs establish Theorem~\ref{Thm_Main}.
\par
We start with a description of our final algorithm for sampling $M$, which
exploits that for $S_n= \gamma n-A_n$ and $N_A=N_S$, we have $S_n<0$ and
therefore $A_n\ge \gamma n$ for $n>N_A$.

\bigskip

\subparagraph{\textbf{Algorithm M: Samples }$(M(t_1),\ldots,M(t_d))$ \textbf{%
given} $\protect\delta \in (0,1)$, $a\in(0,1]$, $\protect\gamma<\mathbb{E} A_1$, $C\in
\mathbb{R}$, $\overline \protect\sigma$}

\label{alg:final} ~\newline

Step 1: Sample $A_1,\ldots, A_{N_A}$ using Steps 1--9 from \textbf{Algorithm
S} with $S_n=\gamma n-A_n$.

Step 2: Sample $X_1,\ldots, X_{N_X}$ using Steps 1--9 from \textbf{Algorithm
X}.

Step 3: Calculate $N_a$ with (\ref{eq:defNmu}) and set $N=\max(N_A,N_X,N_a)$.

Step 4: If {$N>N_A$}

Step 5: \qquad Sample $A_{N_A+1},\ldots, A_N$ as in Step 10--12 from \textbf{%
Algorithm S} with $S_n=\gamma n-A_n$.

Step 6: EndIf

Step 7: If {$N>N_X$}

Step 8: \qquad Sample $X_{N_X+1},\ldots, X_N$ as in Step 10--12 from \textbf{%
Algorithm X}.

Step 9: EndIf

Step 10: Return $M(t_i) = \max_{1\le n\le N} \left\{-\log A_n + X_n(t_i) +
\mu(t_i)\right\}$ for $i=1,\ldots,d$.

\bigskip

%
%
%
%


{The pathwise construction in Section~\ref{sec:strategy} implied that the
output of Algorithm~\ref{alg:final} is an exact sample of $\{M(t_1),
\ldots , M(t_d)\}$. \thomasnote{Thus it remains to study the} running time of
\textbf{Algorithm M}.

\subsection{Computational complexity} 
We next study the truncation point $N$ in (\ref{eq:truncationsup}). Because 
\ignore{{$N_X$}}\zhipengnote{the number of records} is bounded in probability by a geometric random variable, it is clear
that $N<\infty$ almost surely. 

\ignore{It is our next} \thomasnote{Our aim is  to study the dependence
of our algorithm on the dimension $d$.}
\ignore{to show how our algorithm depends on $d$.} The only places
where $d$ enters the algorithm are in the definition of $n_0$ and the
measure $\mathbb{P}^{(n)}$. Sampling from the latter happens at most a
geometric number of times with parameter $1-\delta$, so the computational
complexity is dominated by the choice of $n_0$.

For any $\zeta>0$, if $d$ is large enough and 
\ignore{ignoring rounding} \thomasnote{if we ignore rounding, the following 
choice of $n_0=n_0(d)$}
\begin{equation*}
\log(n_0(d)) = \frac{\overline\sigma^2}{a^2} -\frac C a+ \frac{%
\overline\sigma}{a}\sqrt{(2+\zeta)\log\left(\frac {de^{-C/a}}{\delta\sqrt{%
2\pi}\phi(\overline\sigma/a)/(\overline\sigma/a)} \right)}
\end{equation*}
satisfies the assumption \thomasnote{$d\,r(n_0)\le \delta$} of Proposition~\ref{prop:choiceL}.

\ignore{The next lemma is useful to show} \thomasnote{The following result
will be needed  for the proof of} the second part of Theorem~\ref{Thm_Main}.
\thomasnote{Recall that $K$ is a positive integer-valued random variable 
with probability mass function $g_{n_0}$.}
\begin{lemma}
\thomasnote{For $ p \ge 1$,} \tonnote{we have} 
$\log(\mathbb{E}[K^p])=O(\log n_0)$ \tonnote{as $d\to\infty$}.
\end{lemma}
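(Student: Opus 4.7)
The plan is to exploit the explicit representation of $K$ given in Lemma~\ref{lem:gsample} and reduce the moment bound to a standard Gaussian tail computation. Writing $f_{n_0}(U) = \overline\sigma^2/a^2 - C/a + (\overline\sigma/a)\,\overline\Phi^{-1}(U\,y_0)$ with $y_0 = \overline\Phi(z_0)$ and $z_0 = (a\log n_0 + C)/\overline\sigma - \overline\sigma/a$, Lemma~\ref{lem:gsample} expresses $K = \lceil \exp(f_{n_0}(U)) - n_0\rceil$. Since $U y_0 \le y_0$ and $\overline\Phi^{-1}$ is decreasing, $\overline\Phi^{-1}(U y_0)\ge z_0$ almost surely, which forces $\exp(f_{n_0}(U))\ge n_0$ and hence $K\le \exp(f_{n_0}(U))$. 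Thus $\mathbb{E}[K^p]\le \mathbb{E}[\exp(p f_{n_0}(U))]$, and it suffices to bound the latter moment generating function type expression.

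Next, I would identify the law of $V := \overline\Phi^{-1}(U y_0)$. A direct computation gives $\mathbb{P}(V>v) = \overline\Phi(v)/\overline\Phi(z_0)$ for $v\ge z_0$, so $V$ has the distribution of a standard normal conditioned on $\{V\ge z_0\}$. Setting $\lambda = p\overline\sigma/a$ and completing the square yields $\mathbb{E}[\exp(\lambda V)] = e^{\lambda^2/2}\,\overline\Phi(z_0-\lambda)/\overline\Phi(z_0)$, so
\[
\log \mathbb{E}[K^p] \le p\Bigl(\tfrac{\overline\sigma^2}{a^2}-\tfrac{C}{a}\Bigr) + \tfrac{\lambda^2}{2} + \log\frac{\overline\Phi(z_0-\lambda)}{\overline\Phi(z_0)}.
\]

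Finally, I would invoke Mill's ratio $\overline\Phi(z) = \phi(z) z^{-1}(1+o(1))$ as $z\to\infty$. Since $n_0=n_0(d)\to\infty$ as $d\to\infty$, we have $z_0\to\infty$, and so $\log[\overline\Phi(z_0-\lambda)/\overline\Phi(z_0)] = \lambda z_0 - \lambda^2/2 + O(1)$. The algebra $\lambda z_0 = p\log n_0 + pC/a - p\overline\sigma^2/a^2$ cancels the constants, leaving $\log\mathbb{E}[K^p] \le p\log n_0 + O(1)$, which is $O(\log n_0)$ as claimed.

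There is no real obstacle: the only care needed is (i) verifying the almost sure inequality $\exp(f_{n_0}(U))\ge n_0$ so that the rounding/subtraction can be dropped cleanly, and (ii) making the Mill's-ratio expansion uniform in $\lambda$ (which is a fixed constant depending only on $p$, $\overline\sigma$ and $a$) as $z_0\to\infty$. Both are routine.
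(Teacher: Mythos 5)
Your proof is correct and is essentially the paper's argument in a different parametrization: the paper bounds $\sum_k k^p g_{n_0}(k)$ by the integral $\int_0^\infty (s+n_0)^p\phi((a\log(n_0+s)+C)/\overline\sigma)\,ds$ and evaluates it to arrive at exactly the same prefactor times the same Gaussian tail ratio $\overline\Phi(z_0-p\overline\sigma/a)/\overline\Phi(z_0)$, which it then controls with nonasymptotic Mill's-ratio bounds rather than the asymptotic expansion you use (both suffice since $n_0(d)\to\infty$). Your entry via the quantile representation $K\le\exp(f_{n_0}(U))$ and the MGF of the conditioned Gaussian is just a change of variables of the paper's integral, so the two proofs coincide in substance.
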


\begin{proof}
Assume $n_0$ sufficiently large. \thomasnote{Then}
\begin{eqnarray*}
\mathbb{E}[K^p]&=&\sum_{k=1}^\infty k^p g_{n_0}(k) \\
&\le& \frac{\int_0^\infty (s+n_0)^p \phi((a\log(n_0+s)+C)/\overline\sigma)ds%
} {\int_0^\infty \phi((a\log(n_0+s)+C)/\overline\sigma)ds} \\
&=& e^{\frac{p^2\overline \sigma}{2a^2}-\frac{Cp}{a}} \frac{\overline
\Phi((a\log(n_0)+C-p \overline\sigma^2/a)/\overline\sigma -\overline\sigma
/a)} {\overline \Phi((a\log(n_0)+C)/\overline\sigma-\overline\sigma/a)} \\
&\le& e^{\frac{p^2\overline \sigma}{2a^2}-\frac{Cp}{a}} \frac{\frac{1}{%
(a\log(n_0)+C-p \overline\sigma^2/a)/\overline\sigma -\overline\sigma /a}
\phi((a\log(n_0)+C-p \overline\sigma^2/a)/\overline\sigma -\overline\sigma
/a) }{\frac{(a\log(n_0)+C)/\overline\sigma -\overline\sigma /a}{%
((a\log(n_0)+C)/\overline\sigma -\overline\sigma /a)^2+1} \phi((a%
\log(n_0)+C)/\overline\sigma -\overline\sigma /a) } \\
&\le& 2 e^{\frac{p^2\overline \sigma}{2a^2}-\frac{Cp}{a}} \exp \left(-\frac{%
p^2 \overline\sigma^2}{2a^2}+\frac{p \overline \sigma}{a}(a\log(n_0)+C)/%
\overline\sigma -\overline\sigma /a)\right) \\
&=& 2 \exp\left(p \log(n_0)-\frac{p \overline\sigma^2}{a^2}\right)\,,
\end{eqnarray*}
\thomasnote{Therefore} $\log \mathbb{E}[K^p]\le p \log(n_0)+\log 2 -
p\overline\sigma^2/a^2$.
\end{proof}

\ignore{
After observing that $\log(n_0(d))=O(\sqrt{\log d})$, we are now ready 
to} \thomasnote{Next we show that} $\log \mathbb{E}[N_X^p]=O(\sqrt{\log d})$. 
We have \thomasnote{the decomposition}
\begin{equation*}
N_X=n_0+\sum_{i=1}^G K_i,
\end{equation*}
\thomasnote{where $K_i$ are iid copies of $K$} 
\ignore{generated from $g_{n_0}(\cdot)$}, \thomasnote{$G$ is the last time
that the segment is not `degenerate' and the definition of $n_0$ implies
 $\log(n_0(d))=O(\sqrt{\log d})$.} 

 Proposition~\ref{prop:choiceL} shows
that $G$ is bounded by a geometric random variable $G^{\prime }$ with
parameter $\delta$ almost surely, while $G^{\prime }$ is independent of the
sequence ignore{\color{magenta}@@@@}\thomasnote{$(K_i)$.} Therefore, we have \tonnote{by Jensen's inequality}
\begin{eqnarray*}
\mathbb{E}[N_X^p]&\le& \mathbb{E}\left[\left(n_0+\sum_{i=1}^{G^{\prime }}K_i\right)^p\right] \\
&\le& \tonnote{\mathbb{E}\left[\left(n_0^p+\sum_{i=1}^{G^{\prime }}K_i^p\right)(1+G^{\prime})^{p-1}\right]} \\
&=&
\tonnote{n_0^p\mathbb{E}\left[ (1+G^{\prime})^{p-1}\right]+\mathbb{E}[K_1^p] \mathbb{E}[G^\prime (1+G^{\prime})^{p-1}] . }
\end{eqnarray*}
Therefore we have shown that $\log \mathbb{E}[N_X^p]=O(\sqrt{\log d})$,
which means \tonnote{that} $\mathbb{E}[N_X^p]$ increases slower than $d^\epsilon$ for any $\epsilon>0$.

\thomasnote{Clearly, $N_A$ or $N_a$ do} not depend on $d$. We only need to show $\mathbb{%
E} [N_A^p]<\infty$, and $\mathbb{E} [N_a^p]<\infty$.

Recall that in Section~\ref{Section_Milestone_RW_new} we sample the
downcrossing segment of the random walk with the nominal distribution, then
the upcrossing segment with the exponential tilted distribution. We denote
the $i$'th downcrossing segment having length $\tau^-_i$, and the $i$'th
upcrossing segment having length $\tau^+_i$. Therefore,
\begin{equation*}
N_{A}=\sum_{i=1}^{L}(\tau^-_i+\tau^+_i),
\end{equation*}
where $L$ is the first time that the upcrossing segment is \tonnote{`degenerate'}.
Recall that $\tau^+$ denotes the first upcrossing time of level 0. Because for
any $x\le 0$,
\begin{equation*}
\mathbb{P}_x(\tau^+ = \infty) \ge \mathbb{P}_0(\tau^+ = \infty) >0,
\end{equation*}%
$L$ is a.s.~bounded by a geometric random variable $L^{\prime }$ with
parameter $q<1$\ignore{,  under \tonnote{the} exponential tilted distribution}.\newline

According to the discussion in Remark~\ref{rem:remove_assump}, we may assume
without loss of generality that $A_n$ has step sizes bounded by $r>0$.
Therefore, $S_{\tau^+_i}\le \gamma$, and $S_{\tau^-_i}\ge r$. Thus, with
Theorem 8.1 in \cite{Gutbook}, for any $p \ge 1$ and $\epsilon>0$, there
exists some constant $V>0$, such that
\begin{equation*}
\mathbb{E}[(\tau^-_{i})^{p(1+\epsilon)}] <V \qquad \text{and} \qquad \mathbb{%
E}[(\tau^+_{i})^{p(1+\epsilon)}] <V.
\end{equation*}
\thomasnote{Again using Jensen's inquuality,} we obtain
\begin{align*}
\mathbb{E}[N_{A}^p] &\le \mathbb{E }\left[\left(\sum_{i=1}^{L^{\prime
}}(\tau^-_i+\tau^+_i)\right)^p \right] \\
&\le \mathbb{E }\left[\frac{\sum_{i=1}^{L^{\prime
}}((\tau^-_i)^p+(\tau^+_i)^p)}{2L^{\prime }} (2L^{\prime})^p \right] \\
&\le \sum_{i=1}^{\infty }\mathbb{E }\left[((\tau^-_i)^p+(\tau^+_i)^p) %
 I(L^{\prime }\ge i) (2L^{\prime})^{p-1} \right] \\
&\zhipengnote{\le 2V^{\frac{1}{1+\epsilon}} \left(\mathbb{E }\left[(2L^{\prime})^{ (p-1)
\frac{1+\epsilon}{\epsilon}} L^{\prime }\right]\right)^{\frac{\epsilon}{1+\epsilon}}} 
<\infty.
\end{align*}

The value of $N_{a}$ is only required to satisfy \thomasnote{(see \eqref{eq:defNmu})}
\begin{equation}\label{eq:5star}
N_{a}\geq \left( \frac{A_{1}\,\exp (C-\underline{X}_{1})}{\gamma }\right) ^{%
\frac{1}{1-a}},
\end{equation}
\zhipengnote{while $a \in (0,1)$.} 
Therefore, for $a \in (0,1)$, we have 
\begin{equation*}
\mathbb{E}[N_{a}^{p}]=\left( \frac{\mathbb{E}[A_{1}]\exp (C)}{\gamma }\right) ^{\frac{p}{%
1-a}}\mathbb{E}[\exp (-\underline{X}_{1})^{p}]^{\frac{1}{1-a}} < \infty.
\end{equation*}%
This naturally holds by Assumption B2). \zhipengnote{When $a=1$, with proper choice of $C$, (\ref{eq:defNmu}) always holds.}

\subsection{Choosing $a$, $C$, and $\protect\gamma$}

\label{sec:aC} Although the \thomasnote{values of $a\in (0,1]$, $\gamma \in (0,\mathbb{E}[A_{1}]$) and $C\in \mathbb R$} do not affect the order of the computational
complexity of our algorithm, we are still interested in discussing some
guiding principles which can be used to choose those parameters for a
reasonably good implementation.
\par
First, note that among $N_{X},$ $N_{A},$ and $N_{a}$, only $N_{X}$ \zhipengnote{would
increase to $\infty$ as the number \thomasnote{$d$ of sampled 
locations} increases to $\infty$. 
(Although $N_a$ also increases in $d$, it remains bounded since $\underline{X}_{1}$ decreases to the minimum over $T$.)}  
Assuming that $C $ has been fixed, we can see that $N_{X}$ decreases pathwise while $a$
increases, therefore we should try to choose $a$ close to 1. On the other
hand, while $a \in (0,1)$, we have \eqref{eq:5star}.\ignore{
\begin{equation*}
N_{a}\geq \left( \frac{A_{1}\,\exp (C-\underline{X}_{1})}{\gamma }\right) ^{%
\frac{1}{1-a}}.
\end{equation*}}
If $A_{1}\,\exp (C-\underline{X}_{1})>\gamma $, then $N_{a}\nearrow \infty $
while $a\nearrow 1$. This analysis highlights a trade-off between the values
of $N_{X}$ and $N_{a}$ with respect to the choice of $a$. Because 
\thomasnote{${\mathbb E}[N_{X}]$}
is not explicitly tractable, we can have a reasonable balancing of the
computational effort by equating $n_{0}$ with \thomasnote{${\mathbb E}[N_{a}]$.} In particular, we
look for the largest value of $a\in (0,1)$ satisfying the following equation
\begin{equation}  \label{eqn:a}
\exp \left( \frac{\overline{\sigma }}{a}\overline{\Phi }^{-1}\left( \delta
\sqrt{2\pi }{\frac{\phi (\overline{\sigma }/a)}{d\overline{\sigma }/a}}%
\right) +\frac{\overline{\sigma }^{2}}{a^{2}}-\frac{C}{a}\right) =\mathbb{E}%
\left[ \left( \frac{A_{1}\,\exp (C-\underline{X}_{1})}{\gamma }\right) ^{%
\frac{1}{1-a}}\right] \thomasnote{.}
\end{equation}%
Note that the left-hand side converges to infinity as $a\searrow 0$ while
the right-hand side is bounded, but the right-hand side converges to
infinity as $a\nearrow 1$ while the left-hand side is bounded, so a solution
exists. Such a solution can be obtained by running a pilot run of $%
\underline X_1$, then search for the desired $a$ numerically.

Another approach consists \thomasnote{of} selecting $a=1$ and adjusting $C$ so that 
(\ref{eq:defNmu}) holds true for all $n\geq 1$. Therefore, we choose \thomasnote{
$
C=\underline{X}_{1}+\log \left(A_{1}/\gamma \right).
$}
The value of $C$ is random, but the algorithms can be modified accordingly,
\ignore{modifying} \thomasnote{by changing} the definition of $n_{0}$, which depends on $C$. However, the
expected computational cost 
\ignore{remains the same order as}\thomasnote{has the same order as in }the case when $C$ is
deterministic.

Similarly, $N_A$ increases pathwise while $\gamma$ increases, while $N_a$
decreases if $\gamma$ increases. One could get the empirical average value
of $N_A$ via simulation, and choose $\gamma$ accordingly such that $N_A$ and
$N_a$ are balanced.

\section{Tolerance Enforced Simulation\label{SECT_TES}}\setcounter{equation}{0}

In this section we illustrate a general procedure which can be applied so
that, for any given $\delta >0$ one can construct a fully simulatable
process \thomasnote{$M_{\delta }$,} with the property that
\begin{equation*}
\mathbb{P}\left( \sup_{t\in T}\left| M( t) -M_{\delta }(t) \right| \leq \delta \right) =1.
\end{equation*}
\thomasnote{For ease of notation we focus on the case $T=[0,1]$.}
The technique can be easily adapted to higher-dimensional sets $T$, \ignore{but the
important assumption involves}\tonnote{as long as one has} an infinite series representation for $X$
which satisfies certain regularity conditions. \ignore{ as we shall explain.}

\tonnote{A TES estimator can be used to easily obtain error bounds for sample-path functionals of the underlying field. For example, in the context of parametric catastrophe bonds, it is not uncommon to use the average extreme precipitation over a certain geographical region as the trigger\zhipengnote{; see \cite{MTAbond}}. This motivates estimating $\mathbb E[u ( \int_T M(s) ds )]$ for some 
function \thomasnote{be consistent: $u$} that is 
specified by the contract characteristics of the catastrophe bond.}
\ignore{ too many: For instance,}\thomasnote{If $u$} 
\tonnote{is Lipschitz continuous with Lipschitz constant 1, then one immediately obtains
\[
\left\vert\mathbb E\left[ u \left( \int_T M(s) ds \right) \right] -\mathbb E\left[ u \left( \int_T M_\epsilon(s) ds \right) \right]\right\vert \le |T| \epsilon.
\]
The form of the TES estimator discussed in this section has the feature that $\int_T M_\epsilon(s) ds$  can be evaluated in closed form. Thus, a TES estimator facilitates the error analysis that could otherwise be significantly more involved.}

The technique presented in this section is not limited to Gaussian processes, and we
do not make this assumption here.
As a result, we do not use Assumptions B1) and B2) in this section, but we replace them with \ignore{A)-D)}\zhipengnote{C1)-C4)} below.
However, Assumptions A1) and A2) on the
renewal sequence $( A_{n}) $ are in force throughout this section.

\subsection{An infinite series representation}
We assume that $(X_{n}(t))_{t\in T}$ can be expressed as an almost
surely convergent series of basis functions with random weights. We
illustrate the procedure with a particularly convenient family of basis
functions.

First, let us write any $m\ge 1$ as $m=2^{j}+k$ for $j\geq 0$ and
$0\leq k\leq 2^{j}-1$, and note that there is only one way to write $m$ in
this form. We assume that there exists a sequence of basis functions $(\Lambda_{m}\left( \cdot \right) )_{m\ge0}$, with support on $[0,1]$
(i.e., $\Lambda_{m}\left( t\right) =0$ for $t\not\in [0,1]$).
Moreover, we assume that $|\Lambda_{0}(t)|,|\Lambda_{1}(t)|\leq 1$
for all $t\in [0,1]$, and that for every $m\geq 1$,
\begin{equation*}
\Lambda _{m}(t)=\Lambda _{1}(2^{j}(t-k/2^{j})).
\end{equation*}%
\zhipengnote{In other words, for $m \ge 2$, each $\Lambda_m(\cdot)$ is a wavelet with the shape of $\Lambda_1(\cdot)$, while shrunk horizontally by factor of $2^j$, and shifted to start at $k/2^j$.}
\newline
We introduce normalizing constants, $\lambda _{0}>0$ and $\lambda
_{m}=\lambda ^{\prime }2^{-j\alpha }$ for $m\geq 1$, where $\alpha \in (0,1)$
and $\lambda ^{\prime }>0$. Finally, we assume that
\begin{equation*}
X_{n}(t)=\sum_{m=0}^{\infty }Z_{m,n}\Lambda_{m}(t)\lambda _{m},
\end{equation*}
where the random variables $(Z_{m,n})_{m\geq 0,n\geq 1}$ are iid. We shall
use $Z$ to denote a generic copy of the $Z_{m,n}$'s and we shall impose
suitable assumptions on the tail decay of $Z$. 
The parameter $\alpha$ relates to the H\"{o}lder
continuity exponent of the process $X_{n}$. For example, if $X_{n}$ is Brownian motion, $\alpha =1/2$. This interpretation
of $\alpha$ will not be used in our development, but it helps to provide intuition which can be used to \thomasnote{inform} the construction of a
model based on the basis functions that we consider. For more information on the connection to the H\"{o}lder properties
implied by $\alpha$, the reader should consult \cite{BCD} and the references therein.

Throughout, we use the following total order among the pairs $\{(m,n):m\geq 0, n\geq 1\}$. We say $( m,n) <( m^{\prime},n^{\prime }) $
if $m+n<m^{\prime }+n^{\prime }$ and in case $m+n=m^{\prime }+n^{\prime }$, we say that $( m,n) $ is smaller
than $\left( m^{\prime },n^{\prime }\right) $ in lexicographic order. In particular, we have
\begin{equation*}
( 0,1) <(0,2)<( 1,1) <( 0,3) < (1,2) < (2,1) <\cdots.
\end{equation*}
We let $\theta ( m,n) $ be the position of $\left( m,n\right) $ in the total order.
We also define $\eta (\cdot ) :\mathbb{N}\rightarrow \mathbb{N}\cup \{0\}\times \mathbb{N}$
to be the inverse function of $\theta ( \cdot ) $, and given $\theta\in \mathbb{N}$, we write
\[
\eta ( \theta) =( \eta_{m}(\theta) ,\eta_{n}( \theta) ).
\]

\subsection{Building blocks for our algorithm}
\label{sec:strategyTES}
We now proceed to describe the construction of $M_{\delta }$, which is adapted from a record-breaking technique introduced in \cite{BC2}. \thomasnote{An important building block of $M_\delta$ is}
\ignore{To define $M_\delta$, we use}the truncated series
\[
X_n(t;K) =\sum_{m\leq K} \lambda_m Z_{m,n}\Lambda_m(t).
\]
\tonnote{It is not required that $X_n$ agrees with the distribution of $X$ on dyadic points, 
although this \thomasnote{is} the case in our primary example of Brownian motion.}
We abuse notation by re-using notation such as $N_X$ and $N_A$ throughout our discussion of TES, but the random variables
are not the same as in the rest of the paper.
\par
Our algorithm relies on three random times.\ignore{ to be defined next.}\thomasnote{
We choose  suitable positive functions $a,\xi_0,\xi_1$ and a positive constant $\gamma$; see 
Proposition~\ref{Prop_core_tes} below for details.
\begin{enumerate}
\item $N_X$: for $k\ge N_X$ and $n\ge 1$,
\begin{equation}\label{eq:7star}
\sup_{t\in T} |X_{n}( t) -X_n(t;k)| \leq \xi_1(k) +\xi_0(k) a(n)
\end{equation}
and, for $n\ge N_X$,
\begin{equation}\label{eq:8star}
\sup_{t\in T} |X_n(t)| \le (a(0)\lambda_0+ \xi_{1}(1))+(\lambda_0+\xi_{0}(1))a(n)\,.
\end{equation}
\item
$N_A=N_A(\gamma)$: for $n\ge N_A$,
\begin{equation*}
A_n\ge \gamma \,n,
\end{equation*}
and we \ignore{can} sample $N_A$ jointly with $(A_1,\ldots,A_{N_A})$ 
using \textbf{Algorithm S} in Section~\ref{Section_Milestone_RW_new}.
\item
$N_\xi$: for $n\ge N_\xi$,
\begin{eqnarray}\label{eq:Nxi}
\lefteqn{(a(0)\lambda_0+ \xi_{1}(1))+(\lambda_0+\xi_{0}(1))a(n) - \log (n\gamma)}\nonumber\\ &\le& \inf_{t\in[0,1]} X_1(t,N_X) - \log(A_1) -\xi_1(N_X) - \xi_0(N_X) a(n).
\end{eqnarray}
\zhipengnote{We will choose $a$} \ignore{\thomasnote{Meaning momentarily?} }\zhipengnote{such that $N_\xi<\infty$ almost surely.}
\end{enumerate}}
Setting $N=\max(N_X,N_A,N_\xi)$, we have, for $t\in T$ and $n\ge N$,
\begin{eqnarray*}
-\log(A_n) + X_n(t) &\le& - \log A_n+(a(0)\lambda_0+ \xi_{1}(1))+(\lambda_0+\xi_{0}(1))a(n)\\
&\le& - \log (n\gamma)+(a(0)\lambda_0+ \xi_{1}(1))+(\lambda_0+\xi_{0}(1))a(n) \\
&\le&- \log(A_1)+ \inf_{t\in[0,1]} X_1(t,N_X) -\xi_1(N_X) - \xi_0(N_X) a(n) \\
&\le& - \log(A_1)+\inf_{t\in[0,1]} X_1(t)\\
&\le& -\log(A_1)+ X_1(t),
\end{eqnarray*}
and therefore, for $t\in T$,
\begin{equation}
\sup_{n\ge 1} \{-\log A_n + X_n(t) + \mu(t) \} = \max_{1\le n\le N} \{-\log A_n + X_n(t)+\mu(t)\}.
\end{equation}

If we select an integer $K_\delta\ge N_X$ such that $\xi_1(K_\delta) + \xi_0(K_\delta) a(n) \le \delta$, then
\[
M_\delta(t) = \max_{1\le n\le N} \{-\log A_n + X_n(t;K_\delta)+\mu(t)\}
\]
satisfies $\sup_{t\in T} |M(t)-M_\delta(t)|\le \delta$.

It remains to explain how to simulate $N_X$ jointly with $(X_1,\ldots, X_N)$ and how to construct $\xi_0$, and $\xi_1$.
For this, we use a variant of the record-breaking technique, but we first need to discuss our assumptions on the $Z_{m,n}$'s.

\subsection{Assumptions on the $Z_{m,n}$'s and an example}
We introduce some assumptions on the distribution of $Z$ in order to use our record-breaking algorithm.
We write $\overline{F}(\cdot )$ for the \thomasnote{right tail of the distribution} of $\left\vert Z\right\vert $,
that is $\overline{F}(t)=\mathbb{P}\left( \left\vert Z\right\vert >t\right) $ for $t\geq 0$.
Assume that we can find: a bounded and nonincreasing function $\overline{H}(\cdot )$ on $[0,\infty)$,
an easy-to-evaluate eventually nonincreasing function $\Gamma ( \cdot )$ on $\mathbb{N}$,
as well as some $\theta_0>0$, $b\in (0,1)$, and $\rho >0$ satisfying the following assumptions with
\[
a( n) =\rho (\log ( n+1) )^{b}:
\]

\begin{enumerate}
\item[C1)]
For $(m,n)$ satisfying $\theta(m,n)\ge \theta_0$, we have
$\overline{F}(a(m)+a(n))\leq \overline{H}(a(m))\overline{H}(a(n))$.

\item[C2)] We have
$\sum_{m=0}^{\infty }\overline{H}(a(m))<\infty$.

\item[C3)]
For $r>\theta_0$, we have
$1>\Gamma ( r) \geq \sum_{(m,n):\theta (m,n)>r}\overline{H}(a(m))\overline{H}(a(n))$.

\item[C4)]
We have $\sum_{r} r^{\varepsilon} \Gamma (r) <\infty $ for some $\varepsilon>0$.

\end{enumerate}

Assumptions C1), C2), and C3) are needed to run the algorithm, and Assumption C4) to bound moments of the
computational complexity.

As an example, we now show that these assumptions are satisfied \thomasnote{if $X_{n}$ is Brownian motion.}
Similar constructions are possible for fractional Brownian motion \thomasnote{(see \cite{AyTq}),} but we do not work out the details here.
First, $\Lambda _{0}( t)=tI( t\in [0,1]) $, $\Lambda _{1}( t)
=2tI( t\in [0,1/2]) +2(1-t)I( t\in (1/2,1]) $, $\alpha =1/2$, and $\lambda _{0}=\lambda ^{\prime }=1$; see
\cite{Sbook}. Second, the $Z_{m,n}$'s are iid
standard Gaussian random variables and one can select $\overline H(t)=\phi(t)$, the standard normal density,
so that we have Assumption A) for $\theta_0= \inf\{\theta: a(\eta_m(\theta))+a(\eta_n(\theta))\geq 2\sqrt{2\pi} \}$
and C2) is evident.
Moreover, selecting any $\rho>4$  and $b=1/2$ allows us to satisfy Assumptions C3) and C4). Indeed, note that
\begin{eqnarray*}
\lefteqn{\sum_{\theta (m,n)\ge r}\overline{H}(a(m))\overline{H}(a(n)) }\\
& =&\sum_{\theta (m,n)\ge r}\left( \frac{2}{\pi }\right) \exp \left( -\rho ^{2}%
\frac{\log (m+1)+\log (n+1)}{2}\right)  \\
& =&\sum_{\theta (m,n)\ge r}\left( \frac{1}{(m+1)(n+1)}\right) ^{\rho^{2}/2}\leq \sum_{\theta (m,n)\geq r}\left( \frac{1}{m+n}\right) ^{\rho^{2}/2}.
\end{eqnarray*}
The point $(m,n)$ with $\theta(m,n)=r$ is one of the $\ell(r)$ points
on the segment between $(\ell(r),0)$ and $(1,\ell(r)-1)$, where $\ell(r)=\lceil \sqrt{2r+1/4}-1/2\rceil$.
We therefore continue to bound as follows:
\[
\sum_{k\ge \ell(r)}k^{1-\rho ^{2}/2}\leq\int_{\ell(r)-1}^\infty x^{1-\rho ^{2}/2} dx =
\frac 1{\rho ^{2}/2-2} \left(\ell(r)-1\right)^{2-\rho ^{2}/2}.
\]
Thus, in the Brownian case we can define $\Gamma ( r)$ to be the right-hand side of the preceding display,
so for instance any $\rho >4$ implies Assumption C4).

\ignore{Let us discuss the implications of our strategy when applied to the Brownian
case.} \thomasnote{In the case when $X_{n}$ is standard}
Brownian motion we have
\begin{equation*}
\sum_{m=0}^{2^{r}-1}\lambda _{m}Z_{m,n}\Lambda _{m}( t)=X_{n}( t) ,
\end{equation*}%
for every dyadic point $t=j 2^{-r}$ with $j=0,1,\ldots,2^{r}$.
Therefore, once we fix any $\delta >0$ (say $\delta =1/2$), we can apply the
previous strategy to obtain $N$ and we can continue sampling $%
Z_{m,n}$ for $m\geq K_{\delta }$ if needed so that we can return
\begin{equation*}
M( t) =\max_{1\leq n\leq N}\left\{-\log \left( A_{n}\right)
+\sum_{m=0}^{2^{r}-1}\lambda _{m}Z_{m,n}\Lambda _{m}\left( t\right) \right\}.
\end{equation*}%
Consequently, we conclude that at least in the Brownian case the procedure
that we present here can be used to evaluate $\left\{ M\left( j/2^{r}\right)
\right\} _{j=0}^{d}$ with $d=2^{r}$ exactly and with expected computational
cost of order $O\left( d\cdot \mathbb{E}[ N] \right) =O\left(d\right) $ -- because 
$\mathbb{E}[ N] $ does not depend on $d$ \thomasnote{and is finite; see Theorem~\ref{Thm_TES} below.}

\subsection{Breaking records for the $Z_{m,n}$'s}
Define $T_{0}=0$, and, for $k\geq 1$,
\begin{equation*}
T_{k}=\inf \{\theta ( m,n) >T_{k-1}:\left\vert Z_{m,n} \right\vert >a(m)+a(n)\}.
\end{equation*}
In this subsection, given some integer $\theta_0\ge 0$,
we develop a technique to sample the random set $\mathcal T = \{T_{k}:T_{k}<\infty \}\cap\{\theta_0+1,\ldots\}$
jointly with $(Z_{m,n})_{m\ge 0, n\ge 1}$.
Indeed, given $\mathcal T$, the $Z_{m,n}$ are independent and have the following distributions.
For $\theta \left( m,n\right) \le \theta_0$, $Z_{m,n}$ has the nominal (unconditional) distribution.
For $\theta \left( m,n\right) \in \mathcal{T}$, $Z_{m,n}$ has the conditional distribution of $Z$ given $\{|Z|>a(m)+a(n)\}$,
and if $\theta \left( m,n\right) \notin \mathcal{T}$, $Z_{m,n}$ has the conditional distribution of $Z$ given
$\{|Z|\leq a(m)+a(n)\}$.

We first note that that only finitely many $T_{k}$'s are finite, so that we can once again apply a record breaking technique,
based on the record-breaking epochs $T_{k}$.
Indeed, applying Assumptions C1), we find that
\begin{equation*}
\sum_{m,n}P\left( \left\vert Z_{m,n}\right\vert >a(m)+a(n)\right) \leq
\sum_{m,n}\overline{H}( a(m)) \overline{H}( a(n)) =
\left(\sum_{m}\overline{H}( a(m)) \right)^{2}<\infty,  \label{eq_BC_TES}
\end{equation*}
and the claim follows from the Borel-Cantelli lemma.

The function \textsc{SampleRecordsZ} given below, which is directly adapted
from Algorithm 2w in \cite{BC2}, allows one to sequentially sample the elements
in $\mathcal{\{}T_{k}:T_{k}<\infty \mathcal{\}}$\ jointly with the $Z_{m,n}$%
's. The function \textsc{SampleRecordsZ} takes as input $\theta_0$ satisfying
$\Gamma ( \theta_0) <1$. \newline

\textbf{Function }\textsc{SampleRecordsZ}$\left( \theta_0\right) $\textbf{:
Samples the set $\mathcal{T}=\{T_{k}:T_{k}<\infty \} \cap \{\theta_0+1, \ldots\}$}


Step 1: Initialize $G\leftarrow \theta_0$ and $\mathcal{T}\leftarrow[\,]$.

Step 2: $u\leftarrow 1$, $d\leftarrow 0$. $V\leftarrow U(0,1)$.

Step 3: While $u>V>d$

Step 4: \qquad $G\leftarrow G+1$

Step 5: \qquad $d\leftarrow \max(d,(1-\Gamma ( G) )\times u)$

Step 6: \qquad $u\leftarrow \mathbb{P}(|Z|\leq a(\eta_m(G)) +a( \eta_n(G) ) )\times u$

Step 7: EndWhile

Step 8: If $V\geq u$, then $\mathcal{T}\leftarrow[\mathcal{T},G]$ and go to Step 2.

Step 9: If $V\leq d$, stop and return $\mathcal{T}$.\newline

The next proposition establishes that the output of the function \textsc{SampleRecordsZ}
has the desired distribution.

\begin{proposition}
\label{prop:recordsZ}
The output from \textsc{SampleRecordsZ}$\left(\theta_0\right) $ is a sample of
the set $\mathcal T =\{T_{k}:T_{k}<\infty\}\cap \{\theta_0+1,\ldots\}$.
Moreover, we have $\mathbb{E}\left[(\max(0,\sup\mathcal T))^\beta\right]<\infty$ for some $\beta>1$.
\end{proposition}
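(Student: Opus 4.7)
My plan is to interpret the procedure as a squeezing argument (a Bernoulli factory) that repeatedly identifies the next record position while revealing nothing about the $Z_{m,n}$ beyond whether $|Z_{m,n}| > a(m)+a(n)$. Let $G_0$ be the value of $G$ at the start of an outer iteration (just after Step 2), and set
\[
q_k = \mathbb{P}(|Z| \le a(\eta_m(k)) + a(\eta_n(k))), \qquad u_j = \prod_{i=1}^{j} q_{G_0+i}.
\]
By independence of the $Z_{m,n}$, $u_j$ is exactly the probability that none of the positions $G_0+1,\ldots,G_0+j$ is a record. Writing $p$ for the probability that no record occurs at any position $>G_0$, Assumption C3) furnishes the squeezing bracket
\[
u_j(1-\Gamma(G_0+j)) \le p \le u_j,
\]
and the sequence $d_j = \max_{i\le j} u_i(1-\Gamma(G_0+i))$ maintained in Step 5 is nondecreasing with $d_j \le p \le u_j$.

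Next I would argue correctness of a single outer iteration. Since $\Gamma(r)\to 0$ (implied by C4)) and $u_j \searrow p$, the bracket $[d_j,u_j]$ contracts to $\{p\}$; as $V\sim U(0,1)$ is independent with $\mathbb{P}(V=p)=0$, the inner while loop terminates almost surely. If exit occurs with $V\ge u_j$ at $G=G_0+j$, then $V\in(u_j,u_{j-1}]$ (with $u_0:=1$), an interval of length $u_{j-1}-u_j$ equal to the probability that the first record after $G_0$ occurs at position $G_0+j$; hence Step 8 correctly declares $G$ to be that record location. If instead $V\le d_j\le p$, then no further record exists and Step 9 correctly terminates. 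Resetting $V$ at Step 2 while retaining $G$ and $\mathcal{T}$ allows an inductive repetition via the Markov property of the $Z_{m,n}$'s conditional on the already-decided indices, and Borel--Cantelli applied to C1) and C2) ensures that the outer loop halts almost surely.

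For the moment claim, I would first bound, for $r\ge \theta_0$,
\[
\mathbb{P}(\sup\mathcal{T} > r) \le \sum_{\theta(m,n)>r}\mathbb{P}(|Z|>a(m)+a(n)) \le \sum_{\theta(m,n)>r}\overline{H}(a(m))\overline{H}(a(n)) \le \Gamma(r),
\]
using C1) and C3). Applying the layer-cake formula with $\beta>1$ yields
\[
\mathbb{E}\bigl[(\max(0,\sup\mathcal{T}))^{\beta}\bigr] \le (\theta_0+1)^{\beta} + \beta\sum_{r>\theta_0} r^{\beta-1}\Gamma(r),
\]
which is finite whenever $\beta-1<\varepsilon$ by C4). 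The choice $\beta = 1+\varepsilon/2$ then satisfies $\beta>1$ and gives the desired finiteness.

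The main obstacle is the distributional part. Verifying that successive outer iterations (with $V$ reset but $G$ and $\mathcal{T}$ retained) yield the correct joint law of the record set requires arguing that conditioning on the outcomes revealed so far --- i.e.\ on which of the already-examined positions are records --- leaves the unexplored tail $\{Z_{m,n}:\theta(m,n)>G\}$ with its original product distribution. Once this conditional independence is laid out cleanly, each outer iteration reduces to the elementary Bernoulli-factory calculation above, and the induction closes.
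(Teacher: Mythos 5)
Your proof of the distributional claim follows essentially the same route as the paper: the same bracketing sequences $u_j$ (product of the non-record probabilities) and $d_j$ (running maximum of $u\cdot(1-\Gamma)$), the same observation that both converge to $\mathbb{P}(T=\infty)$ so the loop terminates a.s., and the same identification of the exit events $\{V\ge u_j\}$ and $\{V\le d_j\}$ with $\{T=j\}$ and $\{T=\infty\}$. The issue you flag as the ``main obstacle'' --- that conditioning on the revealed record/non-record outcomes leaves the unexplored tail $\{Z_{m,n}:\theta(m,n)>G\}$ with its product law --- is exactly what the paper also leaves implicit (it treats only $T_1$ in detail and invokes ``the same reasoning'' for subsequent records), so you are not missing anything the paper supplies; independence of the $Z_{m,n}$ makes this step immediate once stated.

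Where you genuinely diverge is the moment bound, and your route is simpler. The paper analyzes the \emph{algorithm}: it bounds the number $n_i(V_1,\ldots,V_i)$ of inner iterations in the $i$-th outer loop by $\mathbb{P}(n(V)>k)\le u_{k-1}-d_k\le\Gamma(k)$, dominates the number $\Upsilon$ of outer loops by a geometric variable, and then assembles $\max(0,\sup\mathcal{T})=\sum_{i<\Upsilon}n_i$ via a Jensen-type inequality with an auxiliary geometric weight. You instead bound the \emph{random variable} directly: a union bound over $\theta(m,n)>r$ together with C1) and C3) gives $\mathbb{P}(\sup\mathcal{T}>r)\le\Gamma(r)$, and the layer-cake formula with C4) finishes (your displayed constant should carry a factor like $(r+1)^{\beta-1}$ rather than $r^{\beta-1}$, but this is immaterial). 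This is shorter and proves the proposition as stated. What the paper's heavier machinery buys is control of quantities your argument does not touch: the per-iteration counts $n_i$ and the geometric bound on $\Upsilon$ also control the \emph{total running time} of \textsc{SampleRecordsZ} (the final value of $G$, which overshoots $\sup\mathcal{T}$ during the last, rejected outer iteration), and these ingredients are reused in the complexity analysis of Theorem~\ref{Thm_TES}. So your proof is a clean and correct substitute for the proposition itself, but the paper's version does double duty.
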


\begin{proof}
For simplicity we assume throughout this proof that $\theta_0=0$. For the first claim it suffices to show that
\textsc{SampleRecordsZ}$\left( 0\right) $ returns 
{$\mathcal T=\{T_{k}:T_{k}<\infty \} \cap \{1,2,\ldots\}$} without bias\ignore{;
subsequent $T_{k}$'s follow by the same reasoning}. We write $T=T_1$.

In Steps 3 through 5 the algorithm iteratively constructs the sequences $(u_j)$ and $(d_j)$ given by
\begin{equation*}
u_{j}=u_{j-1} \,\mathbb{P}(|Z|\leq a(\eta_m(j)) + a(\eta_n(j))),\quad d_{j}=\max(d_{j-1},u_{j-1}( 1-\Gamma (j) ))
\end{equation*}
with $u_{0}=1$ and $d_{0}=0$. It is evident that both sequences are monotone.
Moreover, we have $u_j = \mathbb{P}(T>j)$ for $j\geq 0$ and $\lim_{j\to\infty} u_j =\mathbb{P}( T=\infty )$.
Similarly, because $\lim_{j\to\infty} \Gamma(j)=0$ we obtain $\lim_{j\to\infty} d_j = \mathbb{P}(T=\infty)$.

Let $n(V)$ be the number of times Step 3
is executed before either going to Step 8 or Step 9. It suffices to check
that when Step 8 is executed then the element added to $\mathcal{T}$ has the
law of $T$ given $T<\infty $, and that Step 9 is executed with probability $\mathbb{P}(T=\infty)$.
For the former, we note that by definition of $n(V)$ and because $u_j\in(d_{j-1},u_j)$, we have for $j\ge 1$
\begin{eqnarray*}
\mathbb{P}\left( n(V)=j\,|\, V\geq u_{n(V)}\right) &=&\frac{\mathbb{P}\left( V\in
(d_{j-1},u_{j-1}),V\geq u_j\right) }{\mathbb{P}\left( V\geq u_{n(V)}\right) } \\
&=&\frac{\mathbb{P}\left( V\in (u_{j},u_{j-1})\right) }{\mathbb{P}\left( V\geq u_{n(V)}\right) }\\
&=&\frac{u_{j-1}-u_{j}}{1-\lim_{k\to\infty}u_k },
\end{eqnarray*}
which equals $\mathbb{P}( T=j|T<\infty)$ as desired.
For the latter, we note that
\[
\mathbb{P}(V\le d_{n(V)}) = 1-\mathbb{P}(V\ge u_{n(V)}) = \mathbb{P}(T=\infty).
\]

In preparation for the proof of the second claim of the proposition,
we bound the probability that the while loop requires more than $k\ge 1$ iterations:
\begin{eqnarray*}
\mathbb{P}(n(V)>k) &\le& \mathbb{P}(V\in (d_k,u_k)) \le \mathbb{P}(V\in (d_k,u_{k-1})) = u_{k-1}-d_k \\
&=& u_{k-1}-\max\{d_{k-1},u_{k-1}(1-\Gamma(k))\} \le \Gamma (k).
\end{eqnarray*}
As a consequence of the inequality
\[
\mathbb{E}[n(V)^\beta] = \sum_{k=0}^\infty ((k+1)^\beta - k^\beta) \mathbb{P}(n(V)>k)
\le 1+\sum_{k=1}^\infty ((k+1)^\beta - k^\beta) \Gamma(k),
\]
we find that $\mathbb{E}[n(V)^\beta]<\infty$ if $\sum_k k^{\beta-1} \Gamma(k)<\infty$.

We have a similar finite-moment bound for subsequent calls to the while loop.
Writing $n_i(V_1,\ldots,V_i)$ for the number of iterations in the $i$-th execution of the while loop,
where $V_1,V_2,\ldots$ are the iid standard uniform random variables generated in subsequent calls to Step 2.
Compared to the above argument for $i=1$, this quantity only depends on $V_1,\ldots,V_{i-1}$ through a random shift of $\Gamma$.
Because $\Gamma$ is eventually nonincreasing, there exists a constant $c'$ such that, for all $i\ge 1$,
\begin{equation}
\label{eq:evdecrbound}
\mathbb{E}[\zhipengnote{n_i(V_1,\ldots,V_i)^\beta}|V_1,\ldots,V_{i-1}] \le c' \sum_k k^{\beta-1} \Gamma(k).
\end{equation}

To prove a bound on the moment of $\sup\mathcal T$, we
first let $\Upsilon$ be the number of times we execute the while loop.
We then note that,
for any random variable $G$ and any $\beta\ge 1$, by Jensen's inequality,
\begin{eqnarray*}
\max(0,\sup \mathcal{T})^\beta&=&\left(\sum_{i=1}^{\Upsilon -1} n_i(V_1,\ldots,V_i)\right)^\beta
=\left(\sum_{i=1}^{\infty} n_i(V_1,\ldots,V_i) I\left( \Upsilon> i\right)\right)^\beta \\
&\le& \sum_{i=1}^\infty \left(\frac{n_i(V_1,\ldots,V_i)I(\Upsilon>i)}{\mathbb{P}(G=i)}\right)^\beta \mathbb{P}(G=i)\\
&=& \sum_{i=1}^\infty n_i(V_1,\ldots,V_i)^\beta I(\Upsilon>i) \mathbb{P}(G=i)^{1-\beta},
\end{eqnarray*}
because the right-hand side is finite almost surely.

Because the event $\{\Upsilon>i-1\}$ only depends on $V_1,\ldots,V_{i-1}$, we have by (\ref{eq:evdecrbound}),
\begin{eqnarray*}
\mathbb{E}[n_i(V_1,\ldots,V_i)^\beta I( \Upsilon>i)] &\le& \mathbb{E}[n_i(V_1,\ldots,V_i)^\beta I( \Upsilon>i-1)]\\
&=& \mathbb{E}\left[ I(\Upsilon>i-1) \mathbb{E}[\zhipengnote{n_i(V_1,\ldots,V_i)^\beta}|V_1,\ldots,V_{i-1}]\right]\\
&\le& c' \left(\sum_k k^{\beta-1} \Gamma(k) \right) P(\Upsilon>i-1)\\
&\le & c' \left(\sum_k k^{\beta-1} \Gamma(k) \right) P(T_1<\infty)^{i-1},
\end{eqnarray*}
where we use the fact that $\Upsilon $ is stochastically dominated by a geometric random
variable with success parameter $P( T_{1}=\infty) >0$.
Combining the preceding displays, we deduce that, for $\beta\ge 1$,
\[
\mathbb{E}\left[\max(0,\sup \mathcal{T})^\beta\right] \le c' \left(\sum_k k^{\beta-1} \Gamma(k) \right) \sum_{i=1}^\infty P(T_1<\infty)^{i-1}\mathbb{P}(G=i)^{1-\beta},
\]
which is seen to be finite for some $\beta>1$ by Assumption C4) upon choosing $G$ geometric with a suitably chosen success probability.
\end{proof}

\subsection{Truncation error of the infinite series}
We next write, for $k\ge 0$
\begin{equation*}
X_{n}( t) =X_{n}( t;k) +\sum_{m>k}\lambda_{m}Z_{m,n}\Lambda _{m}( t).
\end{equation*}
and it is our objective to study the truncation error, i.e., the second term.

The next proposition controls the truncation error in terms of functions
$\xi _{0}$ and $\xi _{1}$ defined for $r\ge 1$ through
\begin{eqnarray*}
\xi _{0}( r) &=&\lambda ^{\prime }(1-2^{-\alpha })^{-1}2^{-\alpha\lfloor \log _{2}( r) \rfloor}, \\
\xi_1(r) &=& \frac{\rho}{\log_2(e)} \left(\lfloor \log _{2}( r) \rfloor + \frac{2^{-\alpha}}{1-2^{-\alpha}}+2\right) \xi_0(r).
\end{eqnarray*}
Note that $\xi_{0}( r) ,\xi_{1}( r) \rightarrow 0$ as $r\rightarrow \infty $.
We also write
\[
N_{X}=\max \{\sup \mathcal{T},\theta_0-1\}.
\]
If $\mathcal{T}$
is empty then $\sup \mathcal{T=-\infty }$ and therefore $N_{X}=\theta_0-1$;
otherwise, if $\mathcal{T}$ is non-empty, then $\sup \mathcal{T}%
\geq \theta_0$ and therefore $N_{X}\geq \theta_0$.

\thomasnote{
\begin{proposition}
\label{Prop_core_tes}
For all $k\geq N_{X}$ and $n\ge 1$, we have \eqref{eq:7star}, and for all $n\geq N_{X}$, \eqref{eq:8star}.
\end{proposition}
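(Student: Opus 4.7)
The plan rests on two ingredients. First, by construction of the record set $\mathcal T$ produced by \textsc{SampleRecordsZ}, every pair $(m,n)$ with $\theta(m,n) > N_X$ and $\theta(m,n) > \theta_0$ satisfies $\theta(m,n)\notin\mathcal T$, so $|Z_{m,n}| \le a(m) + a(n)$. Second, the basis functions $\Lambda_m$ for $m \in [2^j, 2^{j+1})$ have disjoint supports of length $2^{-j}$, so for each fixed $t$ at most one index per dyadic level contributes to $X_n(t)$, and $|\Lambda_m(t)| \le 1$ throughout. A short computation with the explicit formula $\theta(m,n) = (m+n)(m+n-1)/2 + m + 1$ shows $\theta(m,n) > m$ for every $m \ge 0$, $n \ge 1$. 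Combined with either $m > k \ge N_X$ (for \eqref{eq:7star}) or $m \ge 1$ and $n \ge N_X$ (for \eqref{eq:8star}), this forces $\theta(m,n) > N_X$. The degenerate case $\mathcal T = \emptyset$ (where $N_X = \theta_0 - 1$) is handled by noting that then $m \ge \theta_0$ and $\theta(m,n) \ge \theta_0(\theta_0+1)/2 + 1 > \theta_0$, so the bound on $|Z_{m,n}|$ still applies.

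For \eqref{eq:7star}, the triangle inequality combined with $|\Lambda_m| \le 1$ and the disjoint-support grouping yields
\begin{equation*}
|X_n(t) - X_n(t;k)| \le \sum_{j \ge \lfloor \log_2 k \rfloor} \lambda' 2^{-j\alpha} \bigl(a(m_j(t)) + a(n)\bigr),
\end{equation*}
where $m_j(t)$ denotes the unique index in $[2^j,2^{j+1})$ whose support contains $t$ (indices $m \le k$ at the lowest level are simply dropped, which only tightens the bound). The $a(n)$-piece is a pure geometric series evaluating to exactly $\xi_0(k)\,a(n)$. For the $a(m_j(t))$-piece, the bound $a(m_j(t)) \le \rho(j+2)\log 2$ (valid because $b \le 1$ and $m_j(t) < 2^{j+1}$) together with the closed form $\sum_{j \ge J}(j+c)x^j = \frac{x^J}{1-x}\bigl(\frac{x}{1-x} + J + c\bigr)$ at $x = 2^{-\alpha}$, $J = \lfloor \log_2 k \rfloor$, and $c = 2$ reproduces $\xi_1(k)$ after using $\rho\log 2 = \rho/\log_2 e$.

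For \eqref{eq:8star}, I split off the $m=0$ term: $|X_n(t)| \le \lambda_0|Z_{0,n}| + |X_n(t) - X_n(t;0)|$. Since $n \ge N_X$ gives $\theta(0,n) > N_X$ by the same boundary analysis, the first piece is at most $\lambda_0(a(0) + a(n))$. The tail is handled by the identical dyadic argument, now starting at level $j = 0$, producing $\xi_1(1) + \xi_0(1)\,a(n)$. Adding the two contributions gives \eqref{eq:8star}. The main obstacle is the constant-matching bookkeeping that pairs the two dyadic-level sums with the precise closed-form expressions for $\xi_0$ and $\xi_1$; once those constants line up, the remaining steps are routine geometric-series manipulations.
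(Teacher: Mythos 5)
Your proof is correct and follows essentially the same route as the paper: the same decomposition of the truncation error into an $a(m)$-part and an $a(n)$-part via the record-breaking bound $|Z_{m,n}|\le a(m)+a(n)$ (justified through $\theta(m,n)\ge m$, respectively $\ge n$), the same dyadic-level grouping exploiting the disjoint supports of the $\Lambda_m$'s, the same geometric-series identity to recover $\xi_0(k)$ and $\xi_1(k)$, and the same splitting off of the $m=0$ term for \eqref{eq:8star}. Your explicit formula for $\theta(m,n)$ and your separate treatment of the degenerate case $\mathcal{T}=\emptyset$ are just more detailed versions of steps the paper leaves implicit.
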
}

\begin{proof}\thomasnote{We observe that}
\[
\left\vert X_{n}\left( t\right) -X_{n}\left( t;\thomasnote{k}\right) \right\vert \leq
\sum_{m>\thomasnote{k}}\lambda _{m}a(m)|\Lambda _{m}(t)|+a(n)\sum_{m>\thomasnote{k}}\lambda _{m}|\Lambda_{m}(t)|.  \label{eq_diff_sums}
\]
If \thomasnote{$m> k\geq N_{X}$,} because $\theta (m,n)\geq m$, we have from the definition of $N_{X}$, that
\begin{equation*}
\left\vert \lambda _{m}Z_{m,n}\Lambda _{m}\left( t\right) \right\vert \leq
\lambda _{m}\left( a(m)+a(n)\right) |\Lambda _{m}(t)|.
\end{equation*}
We bound the summand of the second sum by noting that, for $r\ge 1$,
\begin{equation*}
\sup_{t\in T}
\sum_{m= r}^\infty\lambda _{m}|\Lambda _{m}(t)|\leq \sup_{t\in T} \sum_{j=\left\lfloor \log
_{2}(r)\right\rfloor }^\infty \sum_{k=0}^{2^{j}-1}\lambda _{2^{j}}|\Lambda_{2^{j}+k}(t)|\leq \sum_{j=\left\lfloor \log _{2}(r)\right\rfloor}^\infty \lambda ^{\prime }2^{-\alpha j}=\xi_{0}( r).
\end{equation*}%
We similarly bound the summand in the first sum, using the definition of $a(\cdot)$ and the fact that
\[
\sum_{j=k}^\infty j s^j=s^k\, \frac{k(1-s)+s}{(1-s)^2}
\]
for $|s|<1$. \thomasnote{These bounds establish \eqref{eq:7star}.}

\thomasnote{Now we turn to the proof of \eqref{eq:8star}. For $n\geq N_{X}$,}
\begin{equation*}
|X_n(t)|\le
\sum_{m=0}^\infty \left\vert \lambda _{m}Z_{m,n}\Lambda _{m}( t) \right\vert \leq
(a(0)+a(n))\lambda_0+
\sum_{m=1}^\infty \lambda _{m}\left( a(m)+a(n)\right) |\Lambda_{m}(t)|,
\end{equation*}
because $\theta (m,n)\geq N_{X}$ for each $n\geq N_{X}$.
The sum over $m$ is bounded by $\xi_1(1)+\xi_0(1)a(n)$ as shown in the proof \thomasnote{of \eqref{eq:7star}.}
\end{proof}

\subsection{Construction of $M_{\delta }$}

Now we are ready to provide the final algorithm for computing $M_{\delta }$.

\subparagraph{\textbf{Algorithm TES: Samples $M_{\delta }$ given $\delta >0$.}}~

Step 1: $\ \mathcal{T}\leftarrow $ Sample \textsc{SampleRecordsZ}$(\theta_0)$

Step 2: \ $N_{X}\leftarrow \max \{\sup \mathcal{T},\theta_0-1\}$

Step 3: \ Sample $Z_{m,n}$ from the nominal distribution if $\theta \left(
m,n\right) \leq \theta_0$

Step 4: \ For $0\leq m\leq N_{X}$ and $\theta \left( m,1\right)
>\theta_0$

Step 5: \ \qquad If $\theta ( m,1) \in \mathcal{T}$: sample $Z_{m,1}$ from the law of $Z$ given $\{|Z|>a(m)+a(1)\}$

Step 6: \ \qquad Else If: sample $Z_{m,1}$ from the law of $Z$ given $\{|Z|\leq a(m)+a(1)\}$

Step 7: \ EndFor

Step 8: \ Sample $A_{1},\ldots ,A_{N_{A}}$ using Steps 1--8 from {\bf Algorithm~S} with $S_{n}=\gamma n-A_{n}$.

Step 9: \ Compute $N_{\xi }$, the smallest $n$ for which (\ref{eq:Nxi}) holds, and let
$N\leftarrow \max (N_{X},N_{A},N_{\xi })$

Step 10: Sample $A_{N_{A}+1},\ldots ,A_{N}$ as in Step 10 from {\bf Algorithm~S} with $S_{n}=\gamma n-A_{n}$.

Step 11: Compute the smallest $K_\delta\geq N_{X}$ such that $\xi_{1}( K_\delta) +\xi _{0}( K_\delta) a(N)\leq \delta$.

Step 12: For $2\leq n\leq N$, $0\leq m\leq K_\delta$, $\theta (m,n) >\theta_0$ \ignore{OR} \zhipengnote{and also for}
$n=1$, $N_{X}<m\leq K_\delta$, $\theta (m,n) >\theta_0$

Step 13: \ \qquad If $\theta ( m,n) \in \mathcal{T}$: sample
$Z_{m,n}$ from the law of $Z$ given $\{|Z|>a(m)+a(n)\}$

Step 14: \ \qquad Else: sample $Z_{m,n}$ from the law of $Z$ given $\{|Z|\leq a(m)+a(n)\}$

Step 15: EndFor

Step 16: Return $M_{\delta }( t) =\max \{X_{n}(t;K_\delta) -\log ( A_{n}) \}$.

\subsection{Exponential moments of $\sup_{t\in[0,1]} |X(t)|$}
We need a bound on the exponential moments of $\sup_{t\in[0,1]} |X(t)|$
in order to analyze $N_{\xi }$. If $X$ is Gaussian and continuous, then
such a bound immediately follows from Borell's inequality \cite[Thm.~2.1.1]{AT}.
The following proposition establishes the existence of exponential moments in the generality
of the present section.

\begin{proposition}
\label{prop:expmoments}
For any $p>0$, we have
\begin{equation*}
\mathbb{E}\exp \left( p \sup_{t\in [0,1]}\left\vert X( t)\right\vert \right) <\infty .
\end{equation*}%
\end{proposition}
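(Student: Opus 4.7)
The plan is to exploit the multi-resolution structure of the basis. For each level $j\ge 0$, the functions $\{\Lambda_{2^j+k}\}_{k=0}^{2^j-1}$ are supported on the pairwise (interior-)disjoint dyadic intervals $[k/2^j,(k+1)/2^j]$, so combined with $|\Lambda_m|\le 1$ this yields the pathwise estimate
\begin{equation*}
\sup_{t\in[0,1]}|X(t)|\;\le\;\lambda_0|Z_0|+\sum_{j=0}^\infty\lambda'\,2^{-j\alpha}\,M_j,
\qquad M_j:=\max_{0\le k<2^j}|Z_{2^j+k}|.
\end{equation*}
Since the $Z_{m,n}$ are iid, the random variables $Z_0$ and $(M_j)_{j\ge 0}$ are mutually independent, so
\begin{equation*}
\mathbb{E}\exp\bigl(p\sup_{t}|X(t)|\bigr)\;\le\;\mathbb{E}\exp(p\lambda_0|Z|)\,\prod_{j=0}^\infty\mathbb{E}\exp\bigl(p\lambda' 2^{-j\alpha}M_j\bigr),
\end{equation*}
and the task reduces to checking that each factor is finite and that the infinite product converges.

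The second step extracts a tail bound on $|Z|$ from assumptions C1)--C2). Taking $m=n$ in C1) for $n$ large enough gives $\overline F(2a(n))\le \overline H(a(n))^2$, so $\overline F$ decays at least like the square of $\overline H\circ a$. On the other hand, summability $\sum_n\overline H(a(n))<\infty$ together with $a(n)=\rho(\log(n+1))^b$, $b\in(0,1)$, forces $\overline H(u)$ to decay faster than $\exp(-c u^{1/b})$ modulo polynomial factors as $u\to\infty$; this follows by comparing the sum to the corresponding integral under the change of variables $u=a(n)$ and using monotonicity of $\overline H$. Substituting back, $\overline F(x)\le C_1\exp(-c_1 x^{1/b})$ for large $x$, and since $1/b>1$ we conclude $\mathbb{E}\exp(p|Z|)<\infty$ for every $p>0$. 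In particular $\mathbb{E}\exp(p\lambda_0|Z|)<\infty$.

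For the product, a union bound gives $\mathbb{P}(M_j>x)\le 2^j\overline F(x)\le C_1\exp(j\log 2-c_1 x^{1/b})$. Choosing the threshold $x_j=(2j\log 2/c_1)^b$ makes this bound at most $e^{-j\log 2}$ for $x\ge x_j$, and splitting the Laplace integral at $x_j$ yields
\begin{equation*}
\mathbb{E}\exp\bigl(p\lambda' 2^{-j\alpha}M_j\bigr)\;\le\;\exp\bigl(p\lambda' 2^{-j\alpha}x_j\bigr)\bigl(1+C_2\,2^{-j\alpha}\bigr)\;=\;1+O\bigl(2^{-j\alpha}j^{b}\bigr)
\end{equation*}
uniformly in $j$. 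Because $\sum_j 2^{-j\alpha}j^b<\infty$, Weierstrass' criterion delivers convergence of the product and completes the argument.

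The main obstacle will be making the heuristic derivation $\overline F(x)\lesssim\exp(-c x^{1/b})$ fully rigorous: the passage from the discrete summability in C2) to a pointwise super-exponential tail must carefully track polynomial (sub-exponential) corrections arising from the Jacobian of the change of variables. However, because $1/b>1$ strictly, any such polynomial losses are absorbed by the dominant exponential decay and do not disturb either the finiteness of $\mathbb{E}\exp(p|Z|)$ or the convergence of the infinite product above.
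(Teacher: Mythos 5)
Your proof is correct, and it shares the two essential ingredients of the paper's argument: the level-wise decomposition $\sup_t|X(t)|\le \lambda_0|Z_0|+\sum_j \lambda' 2^{-j\alpha}M_j$ with $M_j=\max_{0\le k<2^j}|Z_{2^j+k}|$, and the extraction from C1)--C2) of a Weibull-type tail $\overline F(x)\lesssim \exp(-c\,x^{1/b})$ with $1/b>1$ (the paper states this as $\mathbb{E}\exp(|Z/\rho|^{1/b})<\infty$, obtained by exactly the diagonal choice $m=n$ in C1) plus the summability in C2) that you describe; your caution about the sub-exponential corrections is warranted but, as you note, harmless since $1/b>1$). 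Where you genuinely diverge is in the final assembly. The paper does \emph{not} use independence across levels: it bounds $\mathbb{P}(\sum_j \lambda'2^{-\alpha j}M_j>y)$ by a union bound over $j$, allocating the threshold $(2^{\alpha/2}-1)2^{-\alpha j/2}y$ to level $j$ (these allocations sum to $y$), and then shows each summand is $O(2^j e^{-(c\,2^{\alpha j/2}y)^{1/b}})$, which is summable in $j$ and decays in $y$ faster than any exponential. You instead exploit that the $M_j$ are mutually independent (disjoint index sets of iid variables) to factor $\mathbb{E}\exp(p\sup_t|X(t)|)$ into an infinite product, and verify convergence via the estimate $\mathbb{E}\exp(p\lambda'2^{-j\alpha}M_j)=1+O(2^{-j\alpha}j^b)$. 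Both routes are valid; the product argument is arguably more transparent and gives a cleaner quantitative handle on each level, while the paper's threshold-allocation argument is more robust in that it would survive without independence across levels (only the within-level union bound $\mathbb{P}(M_j>x)\le 2^j\overline F(x)$, which you also use, is needed). One cosmetic point: your splitting of the Laplace integral at $x_j=(2j\log 2/c_1)^b$ needs the tail integral $\int_{x_j}^\infty e^{sx}\,2^j e^{-c_1x^{1/b}}\,dx$ to be bounded uniformly in $j$; this follows since $j\log 2\le \tfrac12 c_1x^{1/b}$ on that range and $s\le p\lambda'$, so it is worth making that one-line observation explicit.
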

\begin{proof}
We first note that
\begin{equation*}
\sup_{t\in [ 0,1]}\left\vert X_{n}( t) \right\vert \leq \lambda_0 Z_{0,n} + \sum_{j=1}^{\infty }\lambda ^{\prime }
2^{-\alpha j}\max_{k=0,\ldots,2^{j}-1}\left\vert Z_{2^{j}+k,n}\right\vert.
\end{equation*}
It suffices to prove that the tail of the infinite sum in this expression is ultimately lighter than any exponential.
A union bound leads to, for $y\ge 0$,
\begin{eqnarray*}
\lefteqn{
\mathbb{P}\left( \sum_{j=1}^{\infty }\lambda ^{\prime }2^{-\alpha
j}\max_{k=0,\ldots,2^{j}-1}\left\vert Z_{2^{j}+k,n}\right\vert >y\right)}\\ &\leq
&\sum_{j=1}^{\infty }\mathbb{P}\left( \lambda ^{\prime }
2^{-\alpha j}\max_{k=0,\ldots,2^{j}-1}\left\vert Z_{2^{j}+k,n}\right\vert >
(2^{\alpha/2}-1) 2^{-\alpha j/2} y\right)  \label{eq:B_s} \\
&\leq &\sum_{j=1}^{\infty }\mathbb{P}\left( \max_{k=0,\ldots,2^{j}-1}\left\vert
Z_{2^{j}+k,n}\right\vert >\frac{(2^{\alpha/2}-1)2^{\alpha j/2}}{\lambda^\prime} y\right).
\end{eqnarray*}
\tonnote{Assumptions C1) and C2) imply that
$C^{\prime }:=\mathbb{E}\exp \left(  \left\vert Z/\rho\right\vert ^{1/b}\right)<\infty $ and therefore we have by Markov's inequality, for $t\ge 0$,}
\begin{equation*}
\mathbb{P}\left( \max_{k=0,\ldots,2^j-1} \left\vert Z_{2^j+k,n}\right\vert >2^{\alpha j/2} t\right) \leq
2^j \mathbb{P}\left( \left\vert Z\right\vert >2^{\alpha j/2} t\right) \leq C^\prime 2^j e^{-(t 2^{\alpha j/2}/\rho)^{1/b}}.
\end{equation*}
Select some $t_0>0$ and $\kappa\in(1,1/b)$
such that $(t 2^{\alpha j/2}/\rho)^{1/b}\ge j+t^\kappa$ for all $j\ge 1$ and $t\ge t_0$.
Using this bound results in a tail estimate that is summable over $j$ and lighter than any exponential distribution.
\end{proof}

\subsection{Complexity analysis}

We conclude this section with the following result which summarizes the
performance guarantee of Algorithm TES.
\tonnote{Higher moment bounds on the computational costs}
are readily found using the same arguments and a stronger version of Assumption C4).

\begin{theorem}
\label{Thm_TES} \thomasnote{Assume that the conditions {\rm A1), A2), {\rm C1)--C4)}} are in force.}
Given $\delta \in ( 0,1) $,
the output $(M_\delta(t))_{t\in T}$ of {\bf Algorithm TES} satisfies
\begin{equation*}
\sup_{t\in T} \left\vert M_{\delta }( t) -M( t) \right\vert \leq \delta.
\end{equation*}
Moreover, we have
\begin{equation*}
\mathbb{E}[ K_{\delta }] =O\left( (\delta /\log (1/\delta))^{-1/\alpha}\right),
\end{equation*}
where $\alpha$ is determined by the series representation of $X$.
Finally, the total computational costs of running Algorithm TES has 
\ignore{has finite expectation and is}\tonnote{expectation} at most $O\left( (\delta /\log (1/\delta))^{-1/\alpha}\right)$.
\end{theorem}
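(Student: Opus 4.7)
The plan is to establish the three claims in sequence: the pathwise error bound, the expected value of $K_\delta$, and the expected total computational cost. The main work is to convert pathwise bounds into moment bounds by assembling the ingredients that have already been proved (Propositions~\ref{prop:recordsZ} and \ref{prop:expmoments}, the moment estimates on $N_A$ from Section~\ref{sec:complexity}, and Proposition~\ref{Prop_core_tes}).

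For the uniform error bound, start from the pathwise identity
\[
\sup_{n\ge 1}\bigl\{-\log A_n + X_n(t)+\mu(t)\bigr\}= \max_{1\le n\le N}\bigl\{-\log A_n+X_n(t)+\mu(t)\bigr\},\quad t\in T,
\]
which was established in Section~\ref{sec:strategyTES} using the defining inequalities for $N=\max(N_X,N_A,N_\xi)$. Since $K_\delta\ge N_X$, Proposition~\ref{Prop_core_tes} applies and yields $|X_n(t)-X_n(t;K_\delta)|\le \xi_1(K_\delta)+\xi_0(K_\delta)a(n)$ for all $n\ge 1$. For $n\le N$ this is at most $\xi_1(K_\delta)+\xi_0(K_\delta)a(N)\le \delta$ by the choice of $K_\delta$ in Step~11. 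Taking maxima over $n$ and using $1$-Lipschitz of $\max$ in each argument gives $\sup_{t\in T}|M_\delta(t)-M(t)|\le \delta$.

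For the bound on $\mathbb{E}[K_\delta]$, extract from the definitions of $\xi_0$ and $\xi_1$ the asymptotics $\xi_0(r)=O(r^{-\alpha})$ and $\xi_1(r)=O(r^{-\alpha}\log r)$. The condition defining $K_\delta$ becomes $K_\delta^{-\alpha}\bigl(O(\log K_\delta)+O((\log(N+1))^b)\bigr)\le \delta$, which, after an elementary iterative inversion for the $\log K_\delta$ term, yields the pathwise bound
\[
K_\delta = O\Bigl(\bigl(\log(1/\delta)+(\log(N+1))^{b}\bigr)^{1/\alpha}\delta^{-1/\alpha}\Bigr).
\]
Taking expectations reduces the question to moment bounds on $N$. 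Proposition~\ref{prop:recordsZ} supplies $\mathbb{E}[N_X^\beta]<\infty$ for some $\beta>1$ via Assumption~C4); the argument for $N_A$ from Section~\ref{sec:complexity} applies verbatim; and $N_\xi$ is controlled using \eqref{eq:Nxi} by roughly $\exp(C'(\sup_{t}|X_1(t)|+\log A_1))^{1/(1-o(1))}$, whose exponential moments exist by Proposition~\ref{prop:expmoments} combined with Assumption~A1). Since $N$ only enters $K_\delta$ through $(\log(N+1))^b$, finiteness of any polynomial moment of $N$ is enough to conclude $\mathbb{E}[K_\delta]=O\bigl((\log(1/\delta)/\delta)^{1/\alpha}\bigr)$.

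For the expected computational cost, the dominant contributions are: sampling $\mathcal{T}$ (bounded in expectation via Proposition~\ref{prop:recordsZ}), running \textbf{Algorithm~S} for the $A_n$ (expected cost $O(\mathbb{E}[N])$ by the arguments of Section~\ref{sec:complexity}), and sampling the $Z_{m,n}$ plus evaluating the final maximum in Steps~4--15, which requires $O(K_\delta N)$ operations. Applying Hölder's inequality to $K_\delta N$ and using the pathwise bound on $K_\delta$ together with the moment bounds on $N$ gives
\[
\mathbb{E}[K_\delta N]\le \delta^{-1/\alpha}\,\mathbb{E}\Bigl[N\bigl(\log(1/\delta)+(\log(N+1))^b\bigr)^{1/\alpha}\Bigr]=O\bigl(\delta^{-1/\alpha}(\log(1/\delta))^{1/\alpha}\bigr),
\]
matching the claimed rate.

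The main obstacle is obtaining polynomial moment bounds for $N_\xi$ with explicit constants. The coupling in \eqref{eq:Nxi} between the growth function $a(\cdot)$ and the random shift $\inf_{t\in[0,1]} X_1(t;N_X)-\log A_1$ is awkward because $N_X$ itself is random; I expect to handle this by conditioning on $N_X$, bounding $\sup_t|X_1(t;N_X)|\le \sup_t|X_1(t)|+\sup_t|X_1(t)-X_1(t;N_X)|$, and invoking Proposition~\ref{Prop_core_tes} to control the second term in terms of $a(N_X)$, so that the exponential moment estimate of Proposition~\ref{prop:expmoments} together with the polynomial moment of $N_X$ yields the required bound on $N_\xi$.
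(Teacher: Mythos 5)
Your proposal is correct and follows essentially the same route as the paper: the uniform error bound comes from the pathwise truncation identity of Section~\ref{sec:strategyTES} together with the choice of $K_\delta$ in Step~11, the second claim from the pathwise estimate $K_\delta=O\bigl(((\log N)^b+\log(1/\delta))^{1/\alpha}\delta^{-1/\alpha}\bigr)$ combined with the moment bounds on $N_X$, $N_A$, $N_\xi$ from Propositions~\ref{prop:recordsZ} and~\ref{prop:expmoments} and Section~\ref{sec:complexity}, and the cost bound from controlling $\mathbb{E}[K_\delta N]$ exactly as you do. The "obstacle" you flag for $N_\xi$ is handled precisely as you anticipate and is the "simple calculation" the paper alludes to.
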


\begin{proof}
The first claim follows by construction, see Section~\ref{sec:strategyTES}.

\thomasnote{From Proposition~\ref{prop:recordsZ}
we have $\mathbb{E}[N_{X}^\beta] <\infty $} for some $\beta>1$. 
In order to analyze $N_{\xi }$, we use Proposition~\ref{prop:expmoments}. \zhipengnote{In fact, $N_\xi$ only has to be sufficiently large so that we have
\[
(\lambda_0+\xi_0(1)+\xi_0(N_X))\rho(\log (n+1))^b<\frac{1}{2} \log n
\]
and 
\[
-\frac{1}{2} \log n \le \inf_t X_1(t, N_X)-\log A_1-a(0) \lambda_0-\xi_1(1)-\xi_1(N_X)+\log \gamma
\]
for any $n \ge N_\xi$.}
With simple calculations, it follows from Proposition~\ref{prop:expmoments}
and Assumption A1) that \thomasnote{$\mathbb{E}[ N_{\xi}^p] <\infty $} for every $p>0$.
We have argued \thomasnote{in Section \ref{sec:complexity}} that \thomasnote{$\mathbb{E}[N_{A}^{p}] <\infty $}, so
we conclude that \thomasnote{$\mathbb{E}[ N^\beta] <\infty$.}
Finally, using the definition of $\xi_0(r) $ and $\xi_1(r)$ we can see that it there is a constant $\kappa>0$ such that
\begin{equation*}
K_{\delta }=O\left( \left[\frac{\delta}{(\log N)^b+ \kappa\log(1/\delta)}\right]^{-1/\alpha} \right).
\end{equation*}%
This leads to the bound on the first moment of $K_\delta$.
The expected running time of the algorithm is order $\mathbb{E}[K_\delta \times N]$, which \ignore{has a finite first moment} \tonnote{is finite} because $\mathbb{E}[N^\beta]<\infty$.
The complexity bound follows.
\end{proof}


\section{Numerical Results\label{SEC_NUMERICS}}\setcounter{equation}{0}

In this section we show some simulation results to empirically validate
\textbf{Algorithm M}. We also compare numerically the computational cost of
our \zhipengnote{record-breaking} method, noted as \zhipengnote{RB} in the following charts, 
with {\color{black} the existing} \ignore{\color{red}the}{\color{black}exact sampling algorithm developed in \cite{DM} by Dieker and Mikosch (DM) and the exact simulation algorithm using \thomasnote{extremal} function proposed in \cite{DEO} (EF)}\ignore{\color{red}algorithm proposed in \cite{DM}}. We
implemented all three algorithms in Matlab. For our algorithm, we \thomasnote{choose 
the values} of $a$ and $C$
according to our discussion in Section~\ref{sec:aC}. We let $C=0$, then
\thomasnote{choose} the largest $a\in (0,1)$ such that \eqref{eqn:a} holds.

We generated the Brown-Resnick processes, $M(t)=\sup\limits_{n\geq 1}\{-\log
A_{n}+X_{n}(t)-\sigma^2(t)/2\}$, on compact sets. If $X$ is a Brownian
motion it was shown in \cite{BR} that \thomasnote{$M$} has a stationary sample
path on $[0,1]$. Figure~\ref{fig:BM} shows sample paths of \thomasnote{$M$} in
this case. \thomasnote{In my printed version one can hardly see anything in this figure.
It seems to be too dark.}
\begin{figure}[tbp]
\centering
\begin{subfigure}{.5\textwidth}
  \centering
  \includegraphics[width=1\linewidth]{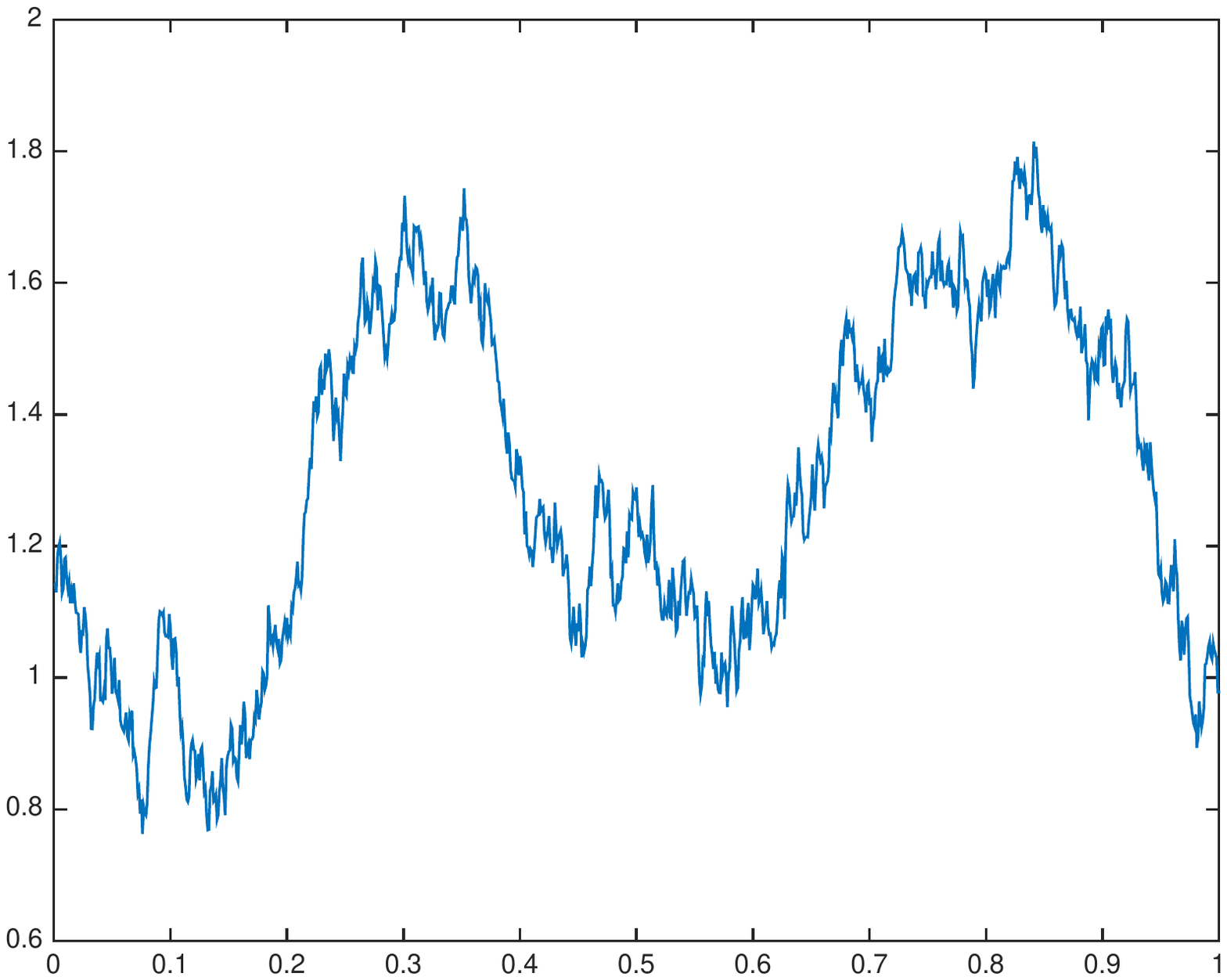}
\end{subfigure}%
\begin{subfigure}{.5\textwidth}
  \centering
  \includegraphics[width=1\linewidth]{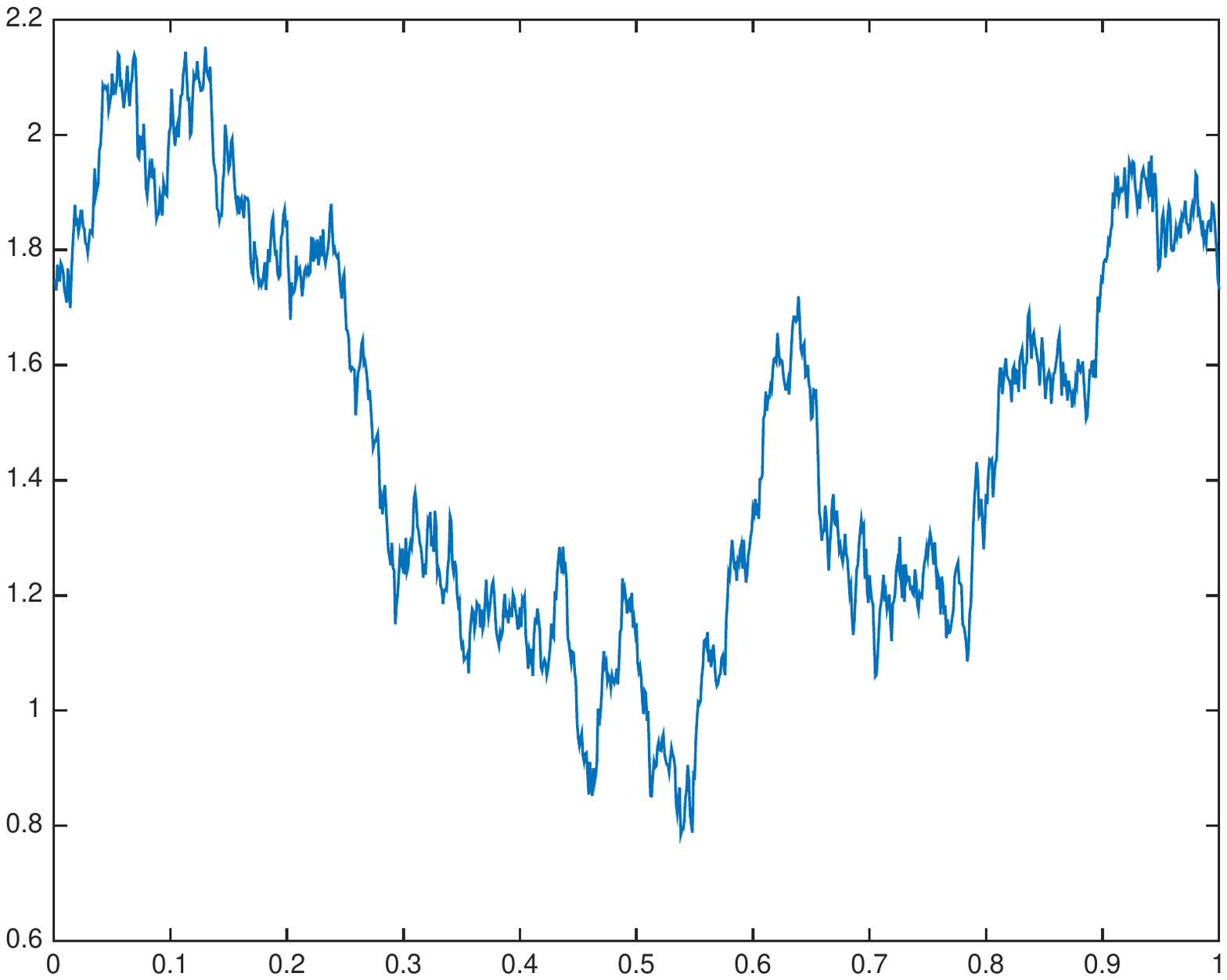}
\end{subfigure}
\caption{The Brown-Resnick process on $[0,1]$ with Brownian motion input.
The grid mesh is 0.001.}
\label{fig:BM}
\end{figure}
Figure~\ref{fig:BRS} presents two samples of the Brown-Resnick random field
on $[0,1]^{2}$ when $X$ is a Brownian sheet.
\begin{figure}[tbp]
\centering
\begin{subfigure}{.5\textwidth}
  \centering
  \includegraphics[width=1\linewidth]{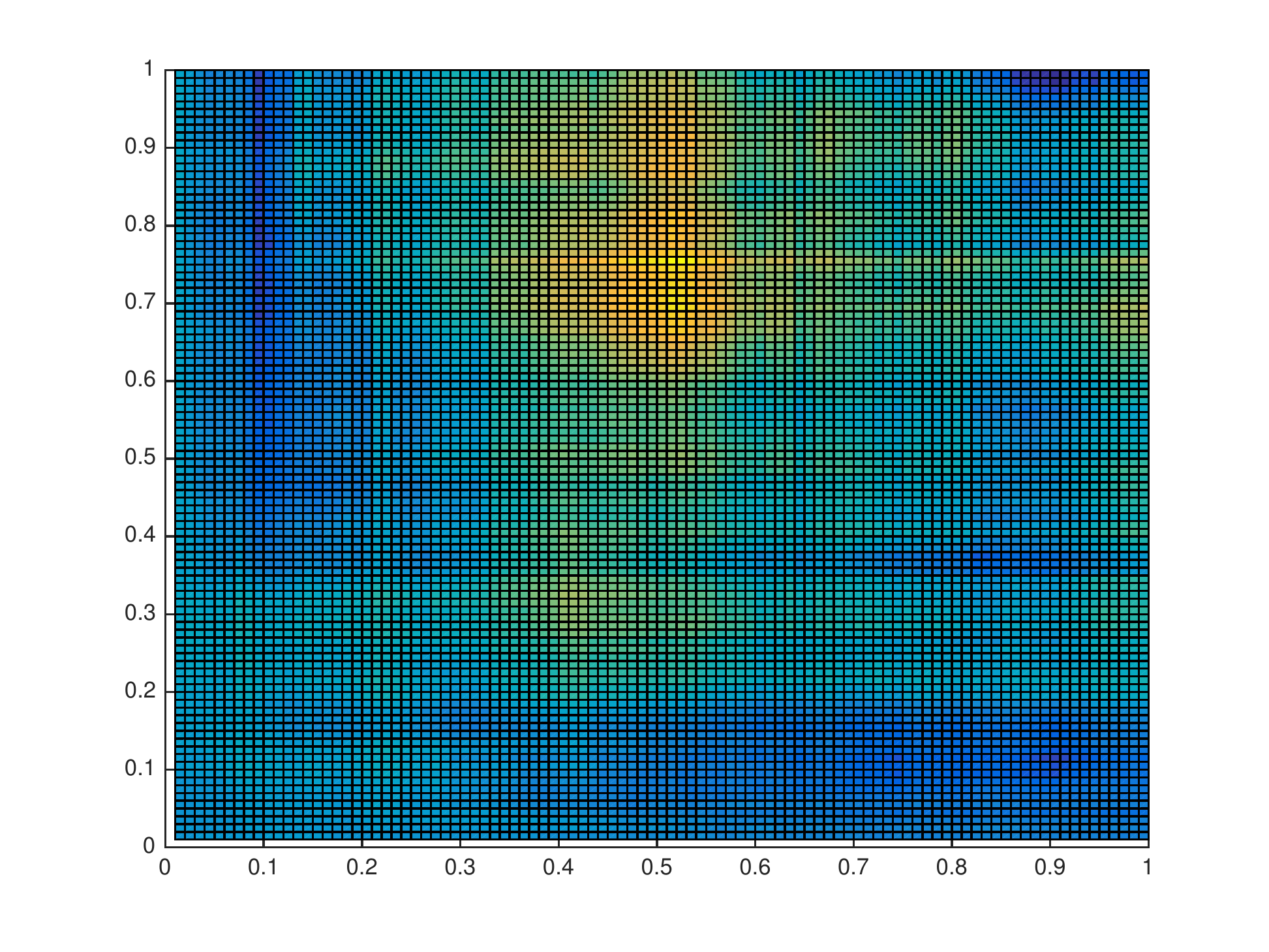}
\end{subfigure}%
\begin{subfigure}{.5\textwidth}
  \centering
  \includegraphics[width=1\linewidth]{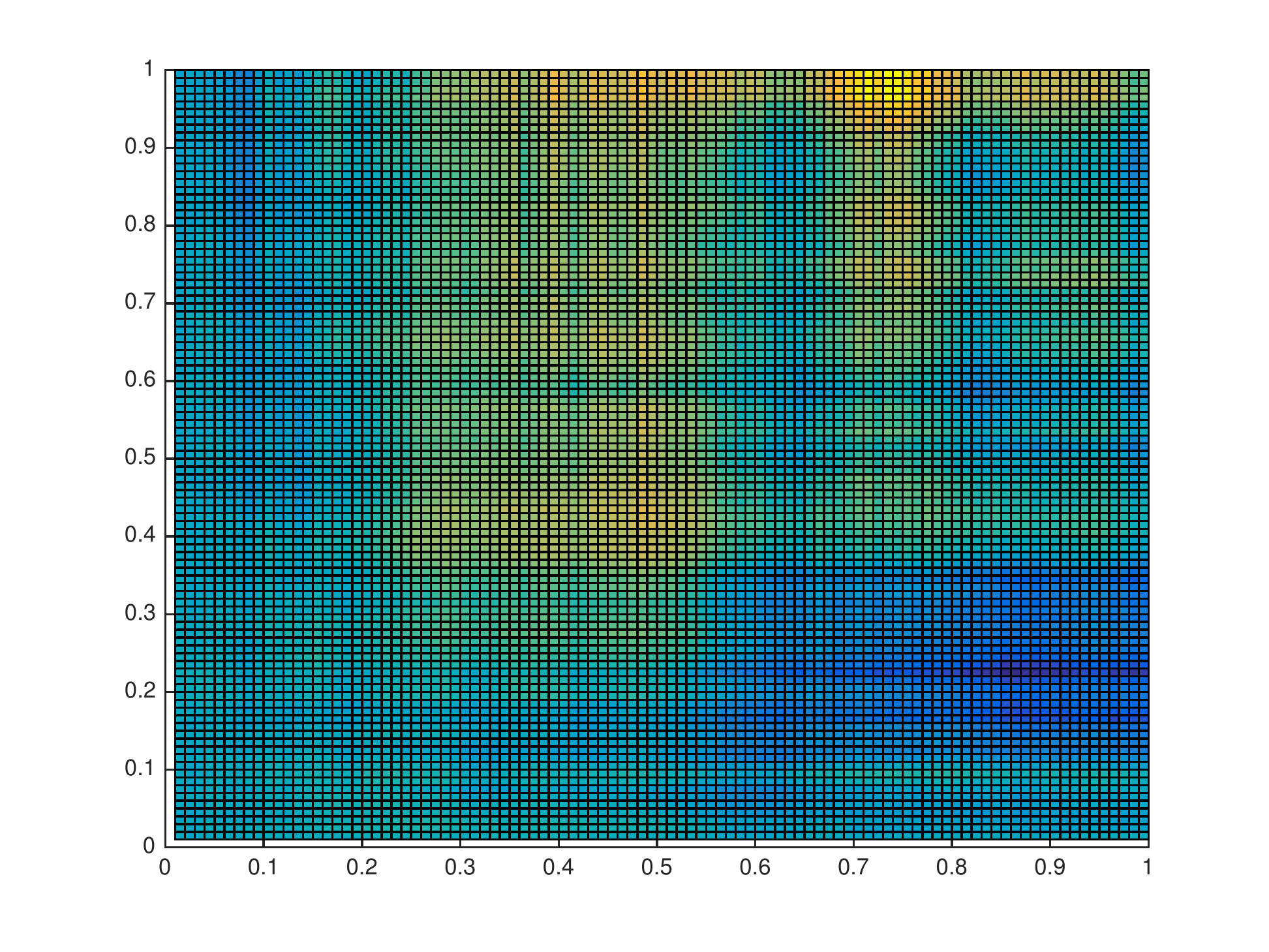}
\end{subfigure}
\caption{The Brown-Resnick field on $[0,1]^{2}$ with Brownian sheet input.
The grid mesh is 0.001.}
\label{fig:BRS}
\end{figure}

We validated the implementation of our algorithm by {\color{black}checking the distribution of $\max(M(0.5),M(1))$, which is the maximum of \thomasnote{the} Brownian-Resnick process at two locations with standard Brownian Motion generator, generated by our algorithm. Note that according to the bivariate 
\thomasnote{H\"usler-Reiss distribution} of this process, $\max(M(0.5),M(1))-\log (2\Phi(\sqrt{0.5}/2))$ should have the standard Gumbel distribution.} 
The QQ-plot in Figure~\ref{fig:qplot} confirms empirically that the distribution of $\max(M(0.5),M(1))-\log (2\Phi(\sqrt{0.5}/2))$ is indeed standard Gumbel.
\begin{figure}[tbp]
\centering
\includegraphics[width=5 in]{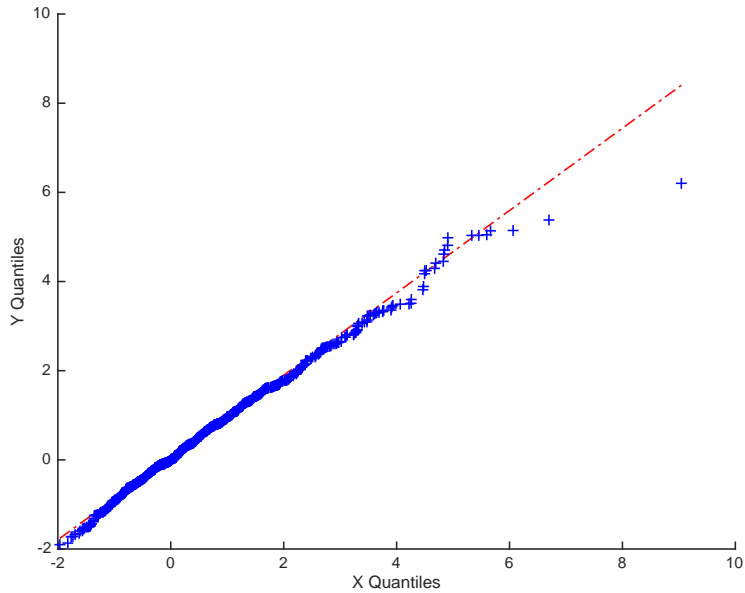}
\caption{The QQ-plot of $M(0.5)\bigvee M(1)-\log (2\Phi(\sqrt{0.5}/2))$ as generated by our algorithm vs. standard
Gumbel}
\label{fig:qplot}
\end{figure}

Next we will compare the computational cost in CPU time of our algorithm with the
algorithm proposed in \cite{DM}. We conducted both algorithms to generate
200 samples of the Brown-Resnick process \thomasnote{$M$} with fractional
Brownian motion inputs. We recorded both the average {\color{black}CPU} time for
generating a single sample and the $95\%$ confidence interval {\color{black}for the mean} based on our
200 samples, for different \thomasnote{grid numbers $d=1000,2000,5000$ and $10000$,} 
and with different Hurst \thomasnote{parameters $H\in \left\{
1/4,1/2,3/4\right\} $}
of the fractional Brownian motion. 
The sample estimates and the $95\%$ confidence intervals for the mean {\color{black}CPU}\ignore{\color{red}running} times to generate a single sample are shown in Table~\ref{tab:cost}. They illustrate that when the number of grids increases, the computational cost of our algorithm appears to increase
almost linearly, while the cost for the algorithm proposed in \cite{DM}
increases quadratically. Because we are using the circulant embedding method
to generate the fractional Brownian vectors, {\color{black} which has a complexity of order 
$O(d \log d)$}, \thomasnote{it is consistent with expectations.} \ignore{what we
expect.} \ignore{One thing}\thomasnote{It is worth noting} \ignore{to notice is that with such method,}\thomasnote{that for this method} the computational cost
to generate a \thomasnote{$d$}-dimensional Gaussian vector is the same as 
\thomasnote{for generating} a $2^{\lceil\log_2 d \rceil}$-dimensional Gaussian vector. However, this
consideration \thomasnote{will not} affect our comparison because we used this method in both
algorithms.

\begin{table}
\centering
\begin{tabular}{p{1.5cm}p{2.83cm}p{2.83cm}p{2.83cm}}
\hline
\multicolumn{4}{c}{Average cost per sample (second) (RB)}\\
\multicolumn{4}{c}{\zhipengnote{($\pm$ half-width of confidence interval)}}\\
\hline
$d$	&	$H=1/4$	&	$H=1/2$	&	$H=3/4$	\\\hline
1000	&	0.03 	$\pm$ 	0.003 	&	0.03 	$\pm$ 	0.002 	&	0.03 	$\pm$ 	0.001 	\\\hline
2000	&	0.08 	$\pm$ 	0.020 	&	0.06 	$\pm$ 	0.007 	&	0.06 	$\pm$ 	0.002 	\\\hline
5000	&	0.19 	$\pm$ 	0.071 	&	0.13 	$\pm$ 	0.004 	&	0.13 	$\pm$ 	0.008 	\\\hline
10000	&	0.32 	$\pm$ 	0.027 	&	0.26 	$\pm$ 	0.009 	&	0.27 	$\pm$ 	0.008 	\\\hline\end{tabular}

\begin{tabular}{p{1.5cm}p{2.83cm}p{2.83cm}p{2.83cm}}
\hline
\multicolumn{4}{c}{Average cost per sample (second) (DM)}\\
\hline
$d$	&	$H=1/4$	&	$H=1/2$	&	$H=3/4$	\\\hline
1000	&	0.40 	$\pm$ 	0.04 	&	0.28 	$\pm$ 	0.03 	&	0.43 	$\pm$ 	0.05 	\\\hline
2000	&	1.23 	$\pm$ 	0.13 	&	1.00 	$\pm$ 	0.13 	&	1.37 	$\pm$ 	0.15 	\\\hline
5000	&	7.32 	$\pm$ 	0.88 	&	4.82 	$\pm$ 	0.67 	&	5.97 	$\pm$ 	0.79 	\\\hline
10000	&	28.98 	$\pm$ 	3.18 	&	21.42 	$\pm$ 	2.64 	&	19.14 	$\pm$ 	2.67 	\\\hline
\end{tabular}

\begin{tabular}{p{1.5cm}p{2.83cm}p{2.83cm}p{2.83cm}}
\hline
\multicolumn{4}{c}{Average cost per sample (second) (EF)}\\
\hline
$d$	&	$H=1/4$	&	$H=1/2$	&	$H=3/4$	\\\hline
1000	&	0.15 	$\pm$	0.02 	&	0.13 	$\pm$	0.02 	&	0.15 	$\pm$	0.02 	\\\hline
2000	&	0.49 	$\pm$	0.06 	&	0.46 	$\pm$	0.05 	&	0.66 	$\pm$	0.09 	\\\hline
5000	&	2.83 	$\pm$	0.32 	&	2.34 	$\pm$	0.28 	&	3.39 	$\pm$	0.43 	\\\hline
10000	&	10.81 	$\pm$	1.46 	&	9.67 	$\pm$	1.24 	&	12.17 	$\pm$	1.70 	\\\hline
\end{tabular}

\caption{Comparison of running time of our algorithm (RB) vs. \protect\cite%
{DM} (DM) \zhipengnote{vs. \protect\cite{DEO} (EF)}.}
\label{tab:cost}
\end{table}

%
%
%
Next we compare the number of Gaussian vectors generated in our algorithm with the algorithms of \cite{DM} and \cite{DEO}. We \ignore{here} generate samples of the Brown-Resnick process with fractional 
Brownian \thomasnote{motion} generator, with $H=3/4$. 
We used the grid numbers $d=1000, 3000, 5000, 7000, 9000$. \zhipengnote{To get comparable relative error, we simulate 1000 times for algorithms DM and EF, and 10000 times for RB.} We calculated the sample average of the number of Gaussian vectors generated in each of the algorithms, and the $95\%$ confidence bounds. Table~\ref{tab:num} illustrates our main result. \zhipengnote{\thomasnote{On the left,} Figure~\ref{fig:num} \thomasnote{exhibits the plot corresponding to} Table~\ref{tab:num} for all three algorithms. \thomasnote{On the right, Figure~\ref{fig:num}  focuses on the 
algorithm RB.}} The number of Gaussian vectors generated increases linearly in both the algorithms of \cite{DM} and \cite{DEO}, with a reduction of constant factor using the \thomasnote{extremal} function algorithm from \cite{DEO}. In our algorithm \thomasnote{this number stays roughly at} the same level.
\begin{table}
\centering
%
%

\begin{tabular}{p{1 cm}p{3.3 cm}p{3.3 cm}p{3.3 cm}}
\hline
\multicolumn{4}{c}{Number of Gaussian vectors}\\
\hline
d	&	RB					&	DM						&	EF						\\\hline
1000	&	29.5 	$\pm$	2.0 	&	1522.1 	$\pm$	83.3 	&	1040.4 	$\pm$	60.5 	\\\hline
3000	&	28.7 	$\pm$	2.1 	&	4440.1 	$\pm$	248.9 	&	3101.1 	$\pm$	194.2 	\\\hline
5000	&	32.5 	$\pm$	4.2 	&	7648.0 	$\pm$	436.3 	&	5056.3 	$\pm$	298.2 	\\\hline
7000	&	31.4 	$\pm$	2.9 	&	10642.0 	$\pm$	638.4 	&	6961.4 	$\pm$	423.8 	\\\hline
9000	&	26.5 	$\pm$	1.5 	&	13570.0 	$\pm$	796.1 	&	8886.6 	$\pm$	510.3 	\\\hline
\end{tabular}
\caption{Comparison of number of Gaussian vectors generated in our algorithm
(RB) v.s. \protect\cite{DM} (DM) v.s. \protect\cite{DEO} (EF), $H=3/4$.}
\label{tab:num}
\end{table}

\begin{figure}[tbp]
\centering
\begin{subfigure}{.5\textwidth}
  \centering
  \includegraphics[width=3.5in]{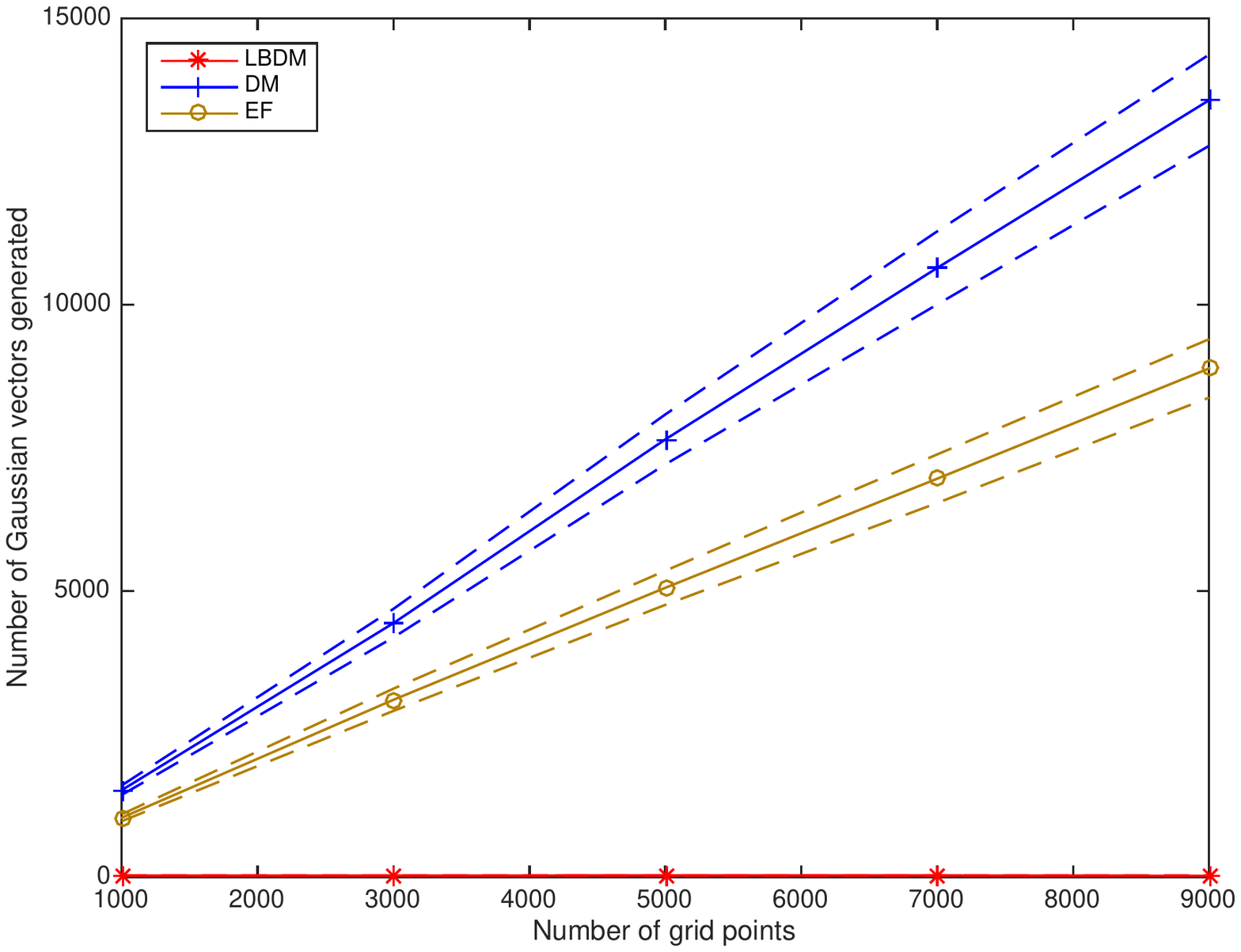}
\end{subfigure}%
\begin{subfigure}{.5\textwidth}
  \centering
  \includegraphics[width=3.5in]{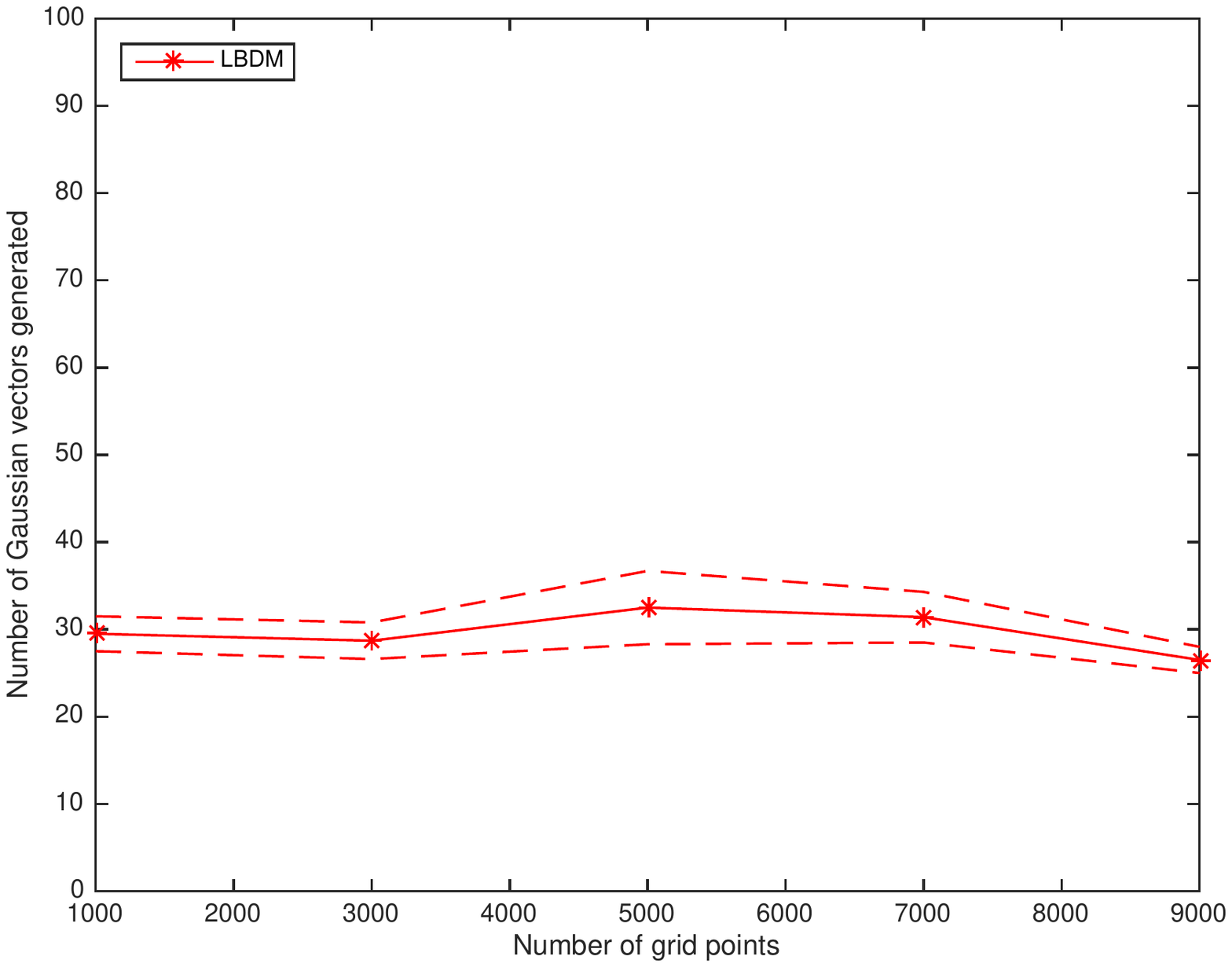}
\end{subfigure}\\
\caption{Comparison of number of Gaussian vectors generated in our algorithm
(RB) v.s. \protect\cite{DM} (DM) v.s. \protect\cite{DEO} (EF), $H=3/4$.}
\label{fig:num}
\end{figure}

\bigskip

\end{document}